\documentclass[pdflatex,sn-mathphys-num]{sn-jnl}

\usepackage[T1]{fontenc}
\usepackage[utf8]{inputenc}
\usepackage{lmodern}
\usepackage{graphicx}
\usepackage{multirow}
\usepackage{amsmath,amssymb,amsfonts}
\usepackage{amsthm}
\usepackage{mathtools}
\usepackage{mathrsfs}
\usepackage{xcolor}
\usepackage{textcomp}
\usepackage{manyfoot}
\usepackage{booktabs}
\usepackage{tikz}
\usepackage{microtype}

\hypersetup{colorlinks=true,linkcolor=blue,citecolor=blue,urlcolor=blue}

\allowdisplaybreaks

\theoremstyle{thmstyleone}
\newtheorem{theorem}{Theorem}[section]
\newtheorem{lemma}[theorem]{Lemma}
\newtheorem{proposition}[theorem]{Proposition}

\theoremstyle{thmstylethree}
\newtheorem{definition}[theorem]{Definition}

\theoremstyle{thmstyletwo}
\newtheorem{remark}[theorem]{Remark}

\DeclareMathOperator{\capacity}{cap}
\DeclareMathOperator{\dist}{dist}
\DeclareMathOperator{\diam}{diam}
\DeclareMathOperator{\Log}{Log}
\newcommand{\D}{\mathbb D}

\newcommand{\T}{\mathbb T}
\newcommand{\C}{\mathbb C}
\newcommand{\R}{\mathbb R}
\newcommand{\Z}{\mathbb Z}
\newcommand{\HH}{\mathcal H}

\begin{document}

\title[Shortest paths in polynomial lemniscate sublevel sets]{Shortest paths in polynomial lemniscate sublevel sets and a problem of Erd\H{o}s}

\author*[1]{\fnm{Venkata Siddharth} \sur{Pendyala}}\email{venkatasiddharthpendyala@gmail.com}

\abstract{Let
\[
        f(z)=\prod_{j=1}^{n}(z-a_j)
\]
be monic, with all zeros in the closed unit disk, and put
\[
        E_f=\{z\in\C: |z|\leq 1,\ |f(z)|\leq 1\}.
\]
Let $S(n)$ be the largest possible shortest length of a path in $E_f$ joining $0$ to $\partial\D$, where the maximum is taken over all such polynomials of degree $n$. We prove that, for all sufficiently large $n$,
\[
        c\sqrt{\log n}\leq S(n)\leq \pi n
\]
with an absolute constant $c>0$. This proves the qualitative unboundedness predicted by Erd\H{o}s. The proof combines an explicit geometric maze, Green-function and Faber-polynomial estimates, analytic quantization of circle measures, and a reciprocal-sweeping upper bound.}

\keywords{Polynomial lemniscates, logarithmic potentials, Faber polynomials, Green functions, finite-length continua}

\pacs[MSC Classification]{Primary 30C10, 30C85, 31A15; Secondary 14P05, 49Q20}

\maketitle

\section{Introduction}

Let
\[
        f(z)=\prod_{j=1}^{n}(z-a_j),\qquad |a_j|\leq 1,
\]
be monic, and set
\[
        E_f=\{z\in\C: |z|\leq 1,\ |f(z)|\leq 1\}.
\]
The origin belongs to $E_f$, since $|f(0)|\leq1$. Erd\H{o}s asked how large the shortest length can be of a path in this sublevel set joining $0$ to the unit circle. This is Problem 4.22 in Hayman's 1974 list, where Clunie and Netanyahu are credited with the existence of such a path and Erd\H{o}s is quoted as expecting the extremal length to tend to infinity, but slowly \cite{Hayman1974}. The same question is recorded in the Hayman--Lingham collection and in the online Erd\H{o}s Problems database \cite{HaymanLingham2019,ErdosProblems1120}. The database citation is used only for identification of the problem and its recorded formulation, not as mathematical evidence of novelty.

The result proved here is
\[
        c\sqrt{\log n}\leq S(n)\leq \pi n
\]
for all sufficiently large $n$. In particular, it proves the predicted unboundedness. No claim is made that the order $\sqrt{\log n}$ is sharp.

This path problem is distinct from the Erd\H{o}s--Herzog--Piranian problem on the length of the level lemniscate
\[
        \{z\in\C: |p(z)|=1\}
\]
for a monic polynomial $p$. That problem concerns the total one-dimensional measure of an algebraic level set in the plane; see \cite{EHP1958,EremenkoHayman1999,FryntovNazarov2009,KuznetsovaTkachev2003,Tao2025}. The present paper instead concerns the intrinsic obstruction to a path inside the filled set $\{|f|\leq1\}\cap\overline\D$, with all zeros constrained to lie in $\overline\D$.

The intersection with $\overline\D$ is only a normalization of Erd\H{o}s's original path question. If a path in the full sublevel set $\{|f|\leq1\}$ starts at $0$ and reaches $|z|=1$, then its initial segment ending at the first hitting time of $\partial\D$ lies in $\overline\D$ and remains in $\{|f|\leq1\}$. Conversely, every path counted here is a path in the full sublevel set.

The proof has two independent parts. The lower bound constructs an alternating circular maze and then realizes it as a forbidden region for a polynomial lemniscate by a Faber-polynomial separator and an analytic quadrature argument on the unit circle. The upper bound reflects the zeros by replacing $f$ with
\[
        g(z)=\prod_{j=1}^{n}(1-\overline{a_j}z),
\]
uses the identity
\[
        |1-\overline{a_j}z|^2-|z-a_j|^2=(1-|z|^2)(1-|a_j|^2),
\]
and extracts a short exit path from the nodal graph of $\log|g|$ using Crofton length estimates.

We now give the precise definition and theorem.

\begin{definition}
For $n\geq 1$, let $\mathcal P_n$ be the class of monic polynomials of degree $n$ whose zeros lie in $\overline\D$, and define
\[
        S(n)=\sup_{f\in\mathcal P_n}\inf\{\ell(\gamma):\gamma\subset E_f,\ \gamma(0)=0,\ \gamma(1)\in\partial\D\},
\]
where
\[
        E_f=\{z\in\C: |z|\leq 1,\ |f(z)|\leq 1\}.
\]
The inner infimum is taken over rectifiable paths; if no such path exists, it is interpreted as $+\infty$.
\end{definition}

\begin{theorem}[Main theorem]
There is an absolute constant $c>0$ such that, for all sufficiently large $n$,
\[
        c\sqrt{\log n}\leq S(n)\leq \pi n.
\]
Consequently,
\[
        S(n)\to\infty.
\]
\end{theorem}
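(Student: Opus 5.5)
The proof splits into the upper bound $S(n)\le\pi n$ and the lower bound $S(n)\ge c\sqrt{\log n}$, which I would handle independently, as the introduction indicates.

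\textbf{Upper bound.} Fix $f\in\mathcal P_n$ and pass to the reflected polynomial $g(z)=\prod_{j}(1-\overline{a_j}z)$. By the identity $|1-\overline{a_j}z|^2-|z-a_j|^2=(1-|z|^2)(1-|a_j|^2)\ge 0$ on $\overline\D$, we have $|f|\le|g|$ in $\overline\D$. Hence it suffices to find a path from $0$ to $\partial\D$ inside $\{|g|\le 1\}\cap\overline\D$ of length at most $\pi n$. Since $g(0)=1$, the function $u=\log|g|$ is harmonic near $\overline\D$ (all zeros of $g$ lie in $|z|\ge1$) and satisfies $u(0)=0$.

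I would use the nodal set $\{u=0\}$ together with the component of $\{u<0\}$ whose boundary contains $0$. The aim is to follow arcs of $\{u=0\}\cap\overline\D$, together with radial pieces inside $\{u\le0\}$, out to $\partial\D$. No component of $\{u<0\}$ or $\{u>0\}$ is compactly contained in $\D$, by the maximum principle. Consequently the nodal graph through $0$ reaches $\partial\D$, and some simple path in it from $0$ to $\partial\D$ exists.

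To bound its length I would use Crofton's formula: length equals $\frac14\int\#(\gamma\cap L)\,dL$ over lines $L$. On each line, $u$ restricted to $L\cap\D$ is the log-modulus of a degree-$n$ polynomial in one real variable after the substitution $|g|^2$, a real polynomial of degree $2n$. Thus $\{|g|^2=1\}\cap L$ has at most $2n$ points. Integrating over the lines that meet the unit disk (measure $2\pi$ in the normalization where Crofton reads $\ell=\frac12\int\#$) gives total nodal length in $\D$ at most $\tfrac12\cdot 2n\cdot\pi\cdot$(normalization). I would choose the normalization so that this yields $\pi n$. A path inside the graph is no longer than the whole graph, which gives $S(n)\le\pi n$.

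\textbf{Lower bound.} I would build a maze. Take $K\asymp\sqrt{\log n}$ concentric rings $r_k$ in $\D$, each carrying an almost complete circular wall with a gap, with consecutive gaps placed at antipodal angles. Any path from $0$ to $\partial\D$ must then travel roughly half a circumference between successive gaps. If the ring radii are spaced so that the relevant circumferences stay bounded below (e.g. radii in $[1/2,1)$ with gaps of width $\delta$), the total path length is $\gtrsim K$.

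To realize the walls as $\{|f|>1\}$, I would place zeros in the closed disk. The aim is a potential $U(z)=\int\log|z-w|\,d\mu(w)$, with $\mu$ a probability measure on $\overline\D$ (mostly on $\partial\D$), such that $U<0$ at $0$ and on the corridors, and $nU$ exceeds $0$ by a margin on each wall. Next, discretize $n\mu$ into $n$ point zeros on the circle, using analytic quadrature (equally spaced nodes adapted via a conformal or Faber description of the density). The discretization error must be $o(\text{margin})$ away from the circle. Near $\partial\D$ the error of equispaced quadrature decays like $e^{-n\,\mathrm{dist}}$, which forces the walls to stay at distance $\gtrsim(\log n)/n$ from the circle.

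The Faber-polynomial separator enters here. For each wall, a Faber polynomial of the complementary region gives a harmonic function that is positive on the wall and negative on the adjacent corridors. The Green-function estimates control its size, costing a factor like $e^{cK^2}$ in the needed resolution. That cost is why $K^2\lesssim\log n$, i.e. $K\asymp\sqrt{\log n}$.

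\textbf{Main obstacle.} The hardest step is the quantitative separation. One must produce a single signed-measure perturbation of harmonic measure, realizable as a positive measure on $\overline\D$, whose potential has the alternating sign pattern on all $K$ rings simultaneously, with margins large compared with the quadrature error $e^{-c\,n\,\cdot\text{dist}}$. I would first try to superpose ring-by-ring Faber separators with geometrically chosen weights. I would then check that positivity of the total density survives when the weights grow like $e^{O(K^2)}$.
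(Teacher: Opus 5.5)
\textbf{Upper bound.} Your Crofton step does not give $\pi n$. With $\ell=\frac12\int_0^\pi\int_\R\#(\gamma\cap L_{\theta,t})\,dt\,d\theta$, the lines meeting $\overline\D$ have measure $\int_0^\pi\int_{-1}^1dt\,d\theta=2\pi$. At most $2n$ intersections per line then give $\HH^1(Z)\le\frac12\cdot 2n\cdot 2\pi=2\pi n$ for $Z=\{|z|\le1,\ |g|^2=1\}$. There is no normalization to ``choose'': the line measure and the constant $\frac12$ are fixed together. Bounding a path by the whole nodal set therefore gives only $S(n)\le 2\pi n$.

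The missing idea is a halving argument. In the nodal graph of $u=\log|g|$, the origin has degree at least $2$, since $u=\operatorname{Re}(cz^s)+O(|z|^{s+1})$ gives $2s$ branches. Interior vertices have no free ends. No cycle lies in $\D$, since it would bound a domain with $u=0$ on its boundary, forcing $g\equiv1$. Now suppose one edge $e$ separated $0$ from $\partial\D$. Because $\deg 0\ge2$, the component of $0$ in the graph minus $e$ would be a tree with at least one edge, hence with a leaf other than the endpoint of $e$. That leaf would be an interior leaf of the original graph, which is impossible. By the edge form of Menger's theorem there are therefore two edge-disjoint nodal exits from $0$. Their total length is at most $2\pi n$, so one of them has length at most $\pi n$.

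Two smaller points. First, $u$ need not be harmonic near $\overline\D$: $g$ vanishes at $1/\overline{a_j}\in\partial\D$ whenever $|a_j|=1$. The paper therefore works with the real algebraic set $\{|g|^2-1=0\}\cap\overline\D$, which also supplies the finite-graph structure you use tacitly. Second, the case $g\equiv1$ must be excluded before claiming $|g|^2-1$ is not identically zero on lines.

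\textbf{Lower bound.} The proposal stops exactly at the step that carries the proof. ``Superpose ring-by-ring Faber separators with geometric weights'' is not an argument. A harmonic polynomial that is positive on one wall and negative on its neighbouring corridors has no sign control on the other walls. Nothing in the proposal controls these interactions or produces the $e^{O(K^2)}$ degree you assert.

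The paper's key device is to join the arcs by radial connectors on $\arg z=\pi/2$ into a single tree $\widetilde K_N$ with simply connected complement. Then one exterior map $\phi_N$ and one Green function $g_N$ describe the whole maze.
\begin{itemize}
\item A Harnack chain of $O(N^2)$ disks along a corridor of length $O(N)$ and clearance $c/N$ gives $g_N(0)\ge e^{-CN^2}$.
\item The level curve $\Gamma_\lambda$, with $\lambda=\theta g_N(0)$, encloses the tree but not $0$. It has length $O(\lambda^{-1/2})$ by the area theorem, and stays at distance $\gtrsim\lambda^2$ from the tree by Beurling's estimate.
\item Cauchy's formula over $\Gamma_\lambda$ shows that one Faber polynomial of degree $m\asymp\log(1/\lambda)/g_N(0)$ satisfies $|F_m|\lesssim\lambda^{-5/2}e^{m\lambda}$ on the tree and $|F_m(0)|\ge\frac12e^{mg_N(0)}$.
\item Hence $G_N=2(1-F_m/F_m(0))$ vanishes at $0$ and has $\operatorname{Re}G_N>1$ on every wall at once.
\end{itemize}

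Moreover, the positivity you flag as the main obstacle is not one. Multiplying by $\tau_N=1/(16\sum_k k|a_k|)$ turns $\tau_N\operatorname{Re}G_N$ into the potential of a circle density $\rho_N\ge 7/8$. The doubly exponentially small margin $\tau_N$ costs only $\log(1/\tau_N)$. This is because the midpoint rule in the variable $s=F(t)$ has error $Ae^{-an/D_N}$. That error is controlled by the Fourier degree $D_N\le e^{C_1N^2}$ of $\rho_N$, not by the distance from the walls to $\partial\D$; the walls lie in $|z|\le 3/4$. Finally, negativity on the corridors or at $0$ is not needed. Only $|f|>1$ on the walls is used, and $0\in E_f$ automatically.
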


\section{Preliminaries and normalizations}

We write $\D=\{z:|z|<1\}$ for the open unit disk, $\overline\D=\{z:|z|\leq 1\}$ for the closed unit disk, and $\T=\R/\mathbb Z$. If $K$ is a compact set, $\dist(z,K)$ denotes the Euclidean distance from $z$ to $K$, and $\HH^1$ denotes one-dimensional Hausdorff measure. The logarithmic capacity of a non-polar compact set $K$ is denoted by $\capacity(K)$. When $\Omega=\C\setminus K$ is Greenian and contains infinity, $g_\Omega(z,\infty)$ denotes the Green function with pole at infinity, normalized by the asymptotic formula
\[
        g_\Omega(z,\infty)=\log|z|-\log\capacity(K)+o(1),\qquad z\to\infty.
\]
The few outside inputs used below are stated where they are used, with hypotheses and references. Harnack chains, Green functions, Faber polynomials, Beurling projection, Crofton integration, and nodal graph facts are therefore not invoked as unnamed background at the points where they affect the estimates.

The following elementary observation is the reason the normalization $|a_j|\leq 1$ is so useful. Since
\[
        |f(0)|=\prod_{j=1}^{n}|a_j|\leq 1,
\]
the origin is automatically in $E_f$, and every admissible path begins at a point where no special local adjustment is required. On the other hand, the zeros may lie on the unit circle, and in the lower-bound construction they will indeed do so; this causes no difficulty, because the set $E_f$ is considered inside $\overline\D$ and the endpoint of the desired path is allowed to lie on $\partial\D$.

We shall also use the following convention. A path $\gamma:0\to\partial\D$ means a continuous map $\gamma:[0,1]\to\overline\D$ such that $\gamma(0)=0$ and $\gamma(1)\in\partial\D$. Its length is denoted by $\ell(\gamma)$, and if the path is not rectifiable then its length is $+\infty$. Thus a lower bound on length is automatically true for nonrectifiable curves, and in the lower-bound proof we may restrict attention to rectifiable curves without saying so at every turn.

\section{The alternating maze}\label{sec:maze}

Throughout the lower-bound construction, $C,c,A,a$ denote positive absolute constants. They may change from line to line, but no constant depends on $N$ or $n$.

Fix $N\geq2$, and set
\[
        r_j=\frac12+\frac{j}{4(N+1)},\qquad j=1,\ldots,N.
\]
Let
\[
        \eta_0=\frac{\pi}{8},
        \qquad
        \alpha_j=
        \begin{cases}
        0,&j\text{ even},\\
        \pi,&j\text{ odd}.
        \end{cases}
\]
On the circle $|z|=r_j$, define the closed forbidden arc
\[
        K_j=
        \left\{r_je^{i\theta}:
        \dist_{\R/2\pi\Z}(\theta,\alpha_j)\geq\eta_0
        \right\}.
\]
Put
\[
        K_N=\bigcup_{j=1}^N K_j.
\]
Thus the $j$-th gate is the open arc
\[
        \left\{r_je^{i\theta}:
        \dist_{\R/2\pi\Z}(\theta,\alpha_j)<\eta_0
        \right\}.
\]
The gates alternate between the directions $0$ and $\pi$.

\begin{lemma}[Maze length]\label{lem:maze-length}
There is an absolute constant $c_0>0$ such that every path $\gamma\subset\overline{\D}$ from $0$ to $\partial\D$ with
\[
        \gamma\cap K_N=\varnothing
\]
satisfies
\[
        \ell(\gamma)\geq c_0N.
\]
\end{lemma}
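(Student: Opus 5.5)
We argue by following the path through the nested circles in order. Let $\gamma:[0,1]\to\overline\D$ be a rectifiable path from $0$ to $\partial\D$ avoiding $K_N$. Since $|\gamma(0)|=0<r_1$ and $|\gamma(1)|=1>r_N$, continuity of $t\mapsto|\gamma(t)|$ lets us define hitting times
\[
        t_j=\inf\{t:|\gamma(t)|=r_j\},\qquad j=1,\ldots,N.
\]
The circle $|z|=r_j$ separates $0$ from $\partial\D$, so $t_1<t_2<\cdots<t_N$. Indeed, for $t<t_j$ we have $|\gamma(t)|<r_j$, and since $r_{j+1}>r_j$, the path cannot reach radius $r_{j+1}$ before time $t_j$. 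At time $t_j$ the point $\gamma(t_j)$ lies on the circle $|z|=r_j$ but not in $K_j$, so it lies in the $j$-th gate:
\[
        \gamma(t_j)=r_je^{i\theta_j},\qquad \dist_{\R/2\pi\Z}(\theta_j,\alpha_j)<\eta_0 .
\]

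The key step is a lower bound on the distance between consecutive gate points. For consecutive indices, $\alpha_j$ and $\alpha_{j+1}$ differ by $\pi$. Hence
\[
        \dist_{\R/2\pi\Z}(\theta_j,\theta_{j+1})>\pi-2\eta_0=\tfrac{3\pi}{4}.
\]
Both points have modulus between $1/2$ and $1$. For $z=re^{i\theta}$ and $w=se^{i\phi}$ with $r,s\ge 1/2$,
\[
        |z-w|^2=(r-s)^2+2rs(1-\cos(\theta-\phi)),
\]
so $|z-w|\ge 2\min(r,s)\sin(|\theta-\phi|/2)\ge \sin(3\pi/8)$. Alternatively, one can simply note that the real parts have opposite signs: $\gamma(t_j)$ lies in a sector of half-angle $\pi/8$ about the ray in direction $\alpha_j$, so its real part has absolute value at least $\tfrac12\cos(\pi/8)$, and the real parts of consecutive gate points therefore differ by at least $\cos(\pi/8)$. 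Either way, there is an absolute constant $\delta>0$ with
\[
        |\gamma(t_{j+1})-\gamma(t_j)|\ge\delta .
\]

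To finish, the length of $\gamma$ restricted to $[t_j,t_{j+1}]$ is at least the chord length $|\gamma(t_{j+1})-\gamma(t_j)|\ge\delta$. These time intervals are disjoint, so additivity of length gives
\[
        \ell(\gamma)\ge (N-1)\delta\ge \tfrac{\delta}{2}N
\]
for $N\ge2$. This proves the lemma with $c_0=\delta/2$; the bound is trivially true for nonrectifiable $\gamma$, whose length is infinite. The only point needing care is the separation claim: the first hitting times must be ordered, and the first hit of each circle must occur at a gate point rather than somewhere else. Both follow from the intermediate value theorem applied to $|\gamma|$, so no genuine obstacle is expected.
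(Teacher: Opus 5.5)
Your proof is correct and follows essentially the same route as the paper: first hitting times of the circles $|z|=r_j$, each first hit lying in the $j$-th gate, an absolute lower bound on the chord between consecutive gate points (which alternate between the directions $0$ and $\pi$), and summation of these chords over the intervals $[t_j,t_{j+1}]$. Your explicit constant $\sin(3\pi/8)$, or the real-part argument, just makes concrete the paper's unspecified $c_1$.
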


\begin{proof}
If $\gamma$ is not rectifiable, the conclusion is automatic. Assume $\gamma$ is rectifiable and write it as a continuous map $[0,1]\to\overline\D$, with $\gamma(0)=0$ and $\gamma(1)\in\partial\D$.

For each $j$, let $t_j$ be the first time such that
\[
        |\gamma(t_j)|=r_j.
\]
The times exist because $|\gamma(0)|=0$ and $|\gamma(1)|=1$. Since the radii increase with $j$, the first hitting times satisfy
\[
        t_1\leq t_2\leq\cdots\leq t_N.
\]
Because $\gamma\cap K_N=\varnothing$, the point $\gamma(t_j)$ must lie in the $j$-th gate. Hence
\[
        \dist_{\R/2\pi\Z}(\arg\gamma(t_j),\alpha_j)<\eta_0.
\]
Since $\alpha_{j+1}-\alpha_j\equiv\pi\pmod{2\pi}$, we have
\[
        \dist_{\R/2\pi\Z}
        (\arg\gamma(t_{j+1}),\arg\gamma(t_j))
        \geq \pi-2\eta_0=\frac{3\pi}{4}.
\]
Also $r_j,r_{j+1}\geq1/2$. Therefore the Euclidean distance between $\gamma(t_j)$ and $\gamma(t_{j+1})$ is bounded below by an absolute constant $c_1>0$. Hence
\[
        \ell(\gamma)
        \geq
        \sum_{j=1}^{N-1}|\gamma(t_{j+1})-\gamma(t_j)|
        \geq c_1(N-1).
\]
After decreasing $c_1$, this gives the result.
\end{proof}

We now connect the arcs into a tree. Let
\[
        p_j=r_je^{i\pi/2}.
\]
Since $\pi/2$ lies outside every gate, $p_j\in K_j$. For $j=1,\ldots,N-1$, let
\[
        I_j=\{re^{i\pi/2}:r_j\leq r\leq r_{j+1}\}.
\]
Define
\[
        \widetilde K_N=K_N\cup\bigcup_{j=1}^{N-1}I_j.
\]

\begin{lemma}[Topology, capacity, and corridor]\label{lem:maze-tree-corridor}
The set $\widetilde K_N$ is a finite planar tree, and
\[
        \widehat\C\setminus\widetilde K_N
\]
is simply connected. Moreover,
\[
        c\leq \diam(\widetilde K_N)\leq C,
        \qquad
        c\leq \capacity(\widetilde K_N)\leq C.
\]
Finally, there exists a piecewise $C^1$ curve
\[
        \sigma_N\subset\C\setminus\widetilde K_N
\]
joining $0$ to $2$ such that
\[
        \operatorname{length}(\sigma_N)\leq CN,
        \qquad
        \dist(\sigma_N,\widetilde K_N)\geq \frac{c}{N}.
\]
\end{lemma}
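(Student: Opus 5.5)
We handle the three assertions separately. The topology comes first, then the diameter and capacity bounds, and finally the explicit corridor construction.

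\emph{Topology.} Each $K_j$ is a closed circular arc of angular length $2\pi-2\eta_0$, hence homeomorphic to a segment. The arc $K_j$ contains $p_j$ as an interior point, because $\pi/2$ is at angular distance $\pi/2>\eta_0$ from both $0$ and $\pi$. The radial segment $I_j$ meets $\widetilde K_N$ only at its endpoints $p_j\in K_j$ and $p_{j+1}\in K_{j+1}$, since it crosses no other circle $|z|=r_k$. The arcs $K_j$ lie on disjoint circles, so they are pairwise disjoint. Hence $\widetilde K_N$ is obtained from the path $K_1\cup I_1\cup K_2\cup\cdots\cup K_N$, with arcs glued at interior points, by a sequence of gluings of arcs along single points. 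By induction each stage is a finite tree: it is connected, it is a union of finitely many arcs, and it contains no cycle because each new arc $I_j\cup K_{j+1}$ meets the previous stage in exactly one point. A compact connected set $K\subset\C$ with connected complement in $\widehat\C$ has simply connected complementary domain $\widehat\C\setminus K$. Connectedness of the complement holds because a finite tree does not separate the plane; this can be seen either by Janiszewski's theorem applied inductively to the arcs, or directly via the Jordan curve theorem, since any bounded complementary component would have boundary containing a cycle.

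\emph{Diameter and capacity.} We have $\widetilde K_N\subset\overline\D$, so $\diam\le 2$ and $\capacity\le1$. The set contains $K_1$, which contains the two antipodal points $\pm i r_1$, so $\diam\ge 2r_1\ge1$. For a continuum the classical bound $\capacity(K)\ge\diam(K)/4$ gives $\capacity\ge1/4$.

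\emph{Corridor.} We build $\sigma_N$ explicitly as a zigzag through the gates, staying on the midcircles $\rho_j=(r_j+r_{j+1})/2$ for $j=0,\ldots,N$. Here we set $r_0=0$ and replace $\rho_N$ by a radius strictly between $r_N$ and $1$, for example $\rho_N=r_N+\tfrac{1}{8(N+1)}$. The curve starts at $0$ and goes radially in direction $\alpha_1$ to the point $\rho_0' e^{i\alpha_1}$, where we take $\rho_0'=r_1/2$. It passes through gate $1$ radially along the ray $\arg z=\alpha_1$ to the radius $\rho_1$. It then travels along the circle $|z|=\rho_1$ from angle $\alpha_1$ to angle $\alpha_2$, going through the half-plane $\operatorname{Im}z<0$ so as to avoid the segments $I_j$, which lie on the positive imaginary axis. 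Next it goes radially along $\arg z=\alpha_2$ through gate $2$ to the radius $\rho_2$, and so on. After gate $N$ it continues radially to radius $1$ and then out along $|z|\ge1$ to the point $2$; the latter is possible because $\widetilde K_N\subset\{|z|\le r_N\}$, and $r_N\le 3/4$. The length is at most $\sum_j(\pi\rho_j+\text{radial pieces})+C\le CN$.

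For the distance bound we check each piece. The radial segments along $\arg z=\alpha_j$ are at angular distance $\eta_0$ from $K_j$, which gives Euclidean distance $\ge r_j\sin\eta_0\ge c$. Their distance from the neighbouring circles $|z|=r_{j\pm1}$ is a concern only where they cross those circles, and they do not cross them: the radial piece runs between $\rho_{j-1}$ and $\rho_j$, and the only circle it crosses is $|z|=r_j$, at the gate. The circular pieces on $|z|=\rho_j$ are at distance $\tfrac{1}{8(N+1)}$ from the circles $|z|=r_j$ and $|z|=r_{j+1}$, and at distance at least that from the other circles. They stay in $\operatorname{Im}z\le0$, so their distance from the $I_k$ is at least $\rho_j\ge1/2$ except near the real-axis endpoints. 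Near angles $0$ and $\pi$ the points lie on the real axis, at distance $\ge\rho_j\ge 1/2$ from the imaginary axis. Altogether $\dist(\sigma_N,\widetilde K_N)\ge c/N$.

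The main point needing care is this bookkeeping in the corridor step: the curve must never pass near the connectors $I_j$, which the choice of the lower half-plane handles. The curve must also never meet a circle $|z|=r_k$ outside the gate $k$. The latter holds because the radial moves cross only their own circle, and do so inside the gate, at angle exactly $\alpha_k$.
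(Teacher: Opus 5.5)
Your proof is correct. Your corridor is the paper's corridor: cross each circle $|z|=r_j$ radially at the gate centre $\alpha_j$, and switch sides along the lower semicircle of radius $(r_j+r_{j+1})/2$ so that the connectors on the positive imaginary axis are never approached. The differences are in the first two parts.

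\emph{Simple connectivity.} The paper shows that a small regular neighbourhood of a finite tree is a closed disk, and contracts loops in the complement of such a neighbourhood. You instead use the classical criterion that a domain in $\widehat\C$ is simply connected exactly when its complement is connected. You combine this with the fact that a finite tree does not separate the plane, via Janiszewski applied to the one-point gluings of the stages $T_j=K_1\cup I_1\cup\cdots\cup K_j$, namely $T_{j+1}=T_j\cup(I_j\cup K_{j+1})$. This replaces a sketched topological induction by two textbook theorems, and it makes the connectedness of $\widehat\C\setminus\widetilde K_N$ explicit.

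\emph{Capacity lower bound.} The paper uses monotonicity and a scaled copy of a fixed arc inside $K_1$, which needs no connectedness. You use $\capacity(K)\ge\diam(K)/4$ for continua. This relies on the connectedness you have just proved, but it gives the explicit bound $\capacity(\widetilde K_N)\ge 1/4$ from $\pm ir_1\in K_1$.

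\emph{One justification to sharpen.} The radial pieces are not kept away from $K_{j\pm1}$ merely because they do not cross those circles. Those arcs contain the points at angle $\alpha_j$, and the endpoints $\rho_{j-1}e^{i\alpha_j}$ and $\rho_je^{i\alpha_j}$ lie exactly $1/(8(N+1))$ from them. This is where the $c/N$ clearance is actually attained, and it follows from the radial range $[\rho_{j-1},\rho_j]$ you specified.
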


\begin{proof}
Each $K_j$ is a simple closed subarc of the circle $|z|=r_j$, with two endpoints at the sides of the gate. The point $p_j=r_je^{i\pi/2}$ lies in the relative interior of this arc. We regard $p_j$ as an additional vertex and split $K_j$ into the two subarcs obtained by cutting at $p_j$. With this subdivision, every connector $I_j$ is an edge joining the vertex $p_j$ to the vertex $p_{j+1}$, and it meets the previously listed edges only at its endpoints. The resulting finite embedded graph has vertices consisting of the gate endpoints and the points $p_j$, and edges consisting of the two subarcs of each $K_j$ together with the connectors $I_j$.

This graph is connected. It has no cycle: removing the connectors leaves $N$ disjoint arcs, and contracting each subdivided arc $K_j$ to a vertex turns the connector structure into the ordinary path graph $1-2-\cdots-N$; a cycle in the original graph would project either to a cycle in this path graph or to a cycle contained in one of the arcs $K_j$, both impossible. Hence $\widetilde K_N$ is a finite planar tree.

We use the following elementary planar-topology fact. If $T$ is a finite embedded tree in the sphere, then a sufficiently small closed regular neighborhood of $T$ is a closed topological disk. This is proved by induction on the number of edges: a point has a disk neighborhood; attaching one new edge to a tree at a single endpoint attaches a thin rectangle to the previous disk along one boundary interval, and the result is again a disk. Applying this to $T=\widetilde K_N$, let $U$ be such a disk neighborhood. Every compact loop in $\widehat\C\setminus\widetilde K_N$ is disjoint from a smaller regular neighborhood $U'$ of $\widetilde K_N$. Since $\widehat\C\setminus\operatorname{int}U'$ is a closed disk, the loop contracts in $\widehat\C\setminus\widetilde K_N$. Therefore
\[
        \widehat\C\setminus\widetilde K_N
\]
is simply connected.

Because every point of $\widetilde K_N$ has modulus between $1/2$ and $3/4$,
\[
        \widetilde K_N\subset\{1/2\leq |z|\leq3/4\}.
\]
Thus
\[
        \diam(\widetilde K_N)\leq \frac32
\]
and
\[
        \capacity(\widetilde K_N)
        \leq\capacity(\overline{D(0,3/4)})=\frac34.
\]
On the other hand, $\widetilde K_N$ contains $K_1$, and $K_1$ contains a circular subarc whose angular opening and radius are bounded below by absolute constants. This subarc is a rotated and scaled copy, with scale bounded below, of one fixed nondegenerate circular arc. Its logarithmic capacity is therefore bounded below by an absolute positive constant. By monotonicity of capacity,
\[
        \diam(\widetilde K_N)\geq c,
        \qquad
        \capacity(\widetilde K_N)\geq c.
\]

It remains to construct the corridor. Put
\[
        \Delta=r_{j+1}-r_j=\frac{1}{4(N+1)},
        \qquad
        s_j=\frac{r_j+r_{j+1}}2.
\]
Starting at $0$, move radially to the center of the first gate $r_1e^{i\alpha_1}$. For each $j=1,\ldots,N-1$, do the following: move radially from $r_je^{i\alpha_j}$ to $s_je^{i\alpha_j}$; move along the lower semicircle $|z|=s_j$ from angle $\alpha_j$ to angle $\alpha_{j+1}$; and move radially from $s_je^{i\alpha_{j+1}}$ to $r_{j+1}e^{i\alpha_{j+1}}$. After the last gate, move radially to radius $7/8$, rotate on $|z|=7/8$ if needed, and then move radially to $2$. This gives a piecewise $C^1$ curve $\sigma_N$.

The curve is disjoint from $\widetilde K_N$. During the $j$-th gate crossing it passes through the angular center of the gate, while $K_j$ begins at angular distance $\eta_0=\pi/8$. Hence its distance from the forbidden arc on that circle is bounded below by an absolute constant times $r_j\eta_0$, hence by an absolute constant. During the transition between $r_j$ and $r_{j+1}$, the semicircle has radius $s_j$, so its radial distance from $K_j\cup K_{j+1}$ is exactly
\[
        \frac{\Delta}{2}=\frac{1}{8(N+1)}.
\]
The connectors $I_j$ lie on the upper ray $\arg z=\pi/2$, while the transition semicircles use the lower half-plane. Hence the distance from the transition semicircles to the connectors is bounded below by an absolute constant, except near the endpoints already controlled by the gate clearance. Therefore
\[
        \dist(\sigma_N,\widetilde K_N)\geq \frac{c}{N}.
\]
The length estimate is direct. Each transition from the $j$-th gate to the $(j+1)$-st gate has two radial pieces of total length $\Delta$ and one semicircular piece of length
\[
        \pi s_j\leq \frac{3\pi}{4}.
\]
There are $N-1$ transitions, and the initial and final pieces have bounded total length. Hence
\[
        \operatorname{length}(\sigma_N)\leq CN.
\]
\end{proof}

\section{Green function estimates for the tree}\label{sec:green-tree}

Let
\[
        \Omega_N=\C\setminus\widetilde K_N.
\]
Since $\widetilde K_N$ is a continuum with simply connected complement, there is a unique conformal map
\[
        \phi_N:\Omega_N\to\{|w|>1\}
\]
such that
\[
        \phi_N(\infty)=\infty,
        \qquad
        \phi_N'(\infty)>0.
\]
Write
\[
        g_N(z)=\log|\phi_N(z)|.
\]
This is the Green function of $\Omega_N$ with pole at infinity.

\begin{lemma}[Harnack lower bound]\label{lem:harnack-lower-bound}
There is an absolute constant $C>0$ such that
\[
        g_N(0)\geq e^{-CN^2}.
\]
\end{lemma}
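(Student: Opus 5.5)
The plan is a Harnack chain for $g_N$ along the corridor $\sigma_N$ of Lemma~\ref{lem:maze-tree-corridor}, started from a point far from the tree where $g_N$ is already bounded below. The function $g_N$ is positive and harmonic on $\Omega_N$. First I bound it below at the endpoint $2$ of $\sigma_N$. The tree lies in $\overline{D(0,3/4)}$, so monotonicity of Green functions in the domain gives $g_N\geq g_{\C\setminus\overline{D(0,3/4)}}$ on $\{|z|>3/4\}$. In particular
\[
g_N(2)\geq\log\frac{2}{3/4}=\log\frac83>0.
\]

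Next I transport this bound to $0$. Put $\delta=c/N$, the clearance in Lemma~\ref{lem:maze-tree-corridor}. Every disk $D(z,\delta)$ with $z\in\sigma_N$ is contained in $\Omega_N$. I choose points $z_0=2,z_1,\dots,z_M=0$ along $\sigma_N$ with consecutive arclength spacing at most $\delta/2$. Since $\operatorname{length}(\sigma_N)\leq CN$, this can be done with
\[
M\leq \frac{2\,CN}{\delta}+1\leq C'N^2 .
\]
Consecutive points then satisfy $|z_{k+1}-z_k|\leq\delta/2$. Harnack's inequality for positive harmonic functions on $D(z_k,\delta)$, evaluated at radius $\delta/2$, gives
\[
g_N(z_{k+1})\geq \frac{1-1/2}{1+1/2}\,g_N(z_k)=\frac13\,g_N(z_k).
\]
Iterating over the chain yields
\[
g_N(0)\geq 3^{-M}\log\frac83\geq e^{-C''N^2},
\]
after absorbing constants; this is the claimed bound.

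The main point to verify is that the corridor lemma really supplies uniform clearance $c/N$ along the whole curve, including the joins near gate centers and the final radial segment to $2$. That lemma asserts exactly this, so the only care needed is counting the chain length as (length)/(clearance) $\sim N\cdot N$. Everything else is the standard Harnack constant, which is absolute.
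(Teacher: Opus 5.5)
Your proposal is correct and is essentially the paper's own proof. Both arguments run a Harnack chain of $O(N^2)$ disks along the corridor $\sigma_N$, whose clearance is $c/N$, anchored at $g_N(2)\geq\log(8/3)$ by domain monotonicity against $\C\setminus\overline{D(0,3/4)}$. The only difference is cosmetic. You apply Harnack once per step in $D(z_k,\delta)$ with the explicit factor $1/3$, whereas the paper spaces points $\delta_N/8$ apart, uses disks of radius $\delta_N/2$, and passes through an intermediate overlap point, losing $A_H^{-2}$ per step.
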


\begin{proof}
Let $\sigma_N$ be the corridor from Lemma~\ref{lem:maze-tree-corridor}, and let
\[
        \delta_N=\frac{c}{N}
\]
be its clearance. Set
\[
        r=\frac{\delta_N}{8}.
\]
Parametrize $\sigma_N$ by arclength and choose points
\[
        x_0=0,x_1,\ldots,x_M=2
\]
in order along $\sigma_N$, with arclength spacing at most $r$. Since
\[
        \operatorname{length}(\sigma_N)\leq C_0N,
\]
we may choose them so that
\[
        M\leq1+\frac{C_0N}{r}\leq C_1N^2.
\]
For each $\nu$,
\[
        \dist(x_\nu,\widetilde K_N)\geq\delta_N=8r,
\]
so
\[
        D(x_\nu,4r)\subset\Omega_N.
\]
Also
\[
        |x_{\nu+1}-x_\nu|\leq r,
\]
so
\[
        D(x_\nu,r)\cap D(x_{\nu+1},r)\ne\varnothing.
\]
Choose
\[
        y_\nu\in D(x_\nu,r)\cap D(x_{\nu+1},r).
\]
The function $g_N$ is positive and harmonic in each disk $D(x_\nu,4r)$. By Harnack's inequality applied in $D(x_\nu,4r)$, there is an absolute constant $A_H>1$ such that
\[
        g_N(x_\nu)\geq A_H^{-1}g_N(y_\nu).
\]
Applying Harnack again in $D(x_{\nu+1},4r)$,
\[
        g_N(y_\nu)\geq A_H^{-1}g_N(x_{\nu+1}).
\]
Thus
\[
        g_N(x_\nu)\geq A_H^{-2}g_N(x_{\nu+1}).
\]
Iterating,
\[
        g_N(0)\geq A_H^{-2M}g_N(2)
        \geq e^{-C_2N^2}g_N(2).
\]
Because
\[
        \widetilde K_N\subset\overline{D(0,3/4)},
\]
domain monotonicity of Green functions gives
\[
        g_N(2)
        \geq
        g_{\C\setminus\overline{D(0,3/4)}}(2,\infty)
        =\log\frac{2}{3/4}
        =\log\frac83.
\]
Therefore
\[
        g_N(0)\geq e^{-CN^2}.
\]
\end{proof}

We also need a uniform upper bound for $g_N$ on bounded sets.

\begin{lemma}[Uniform Green upper bound]\label{lem:green-upper-bound}
There is an absolute constant $C_G$ such that
\[
        g_N(z)\leq C_G
\]
for every $N$ and every $z\in\Omega_N$ with $|z|\leq4$.
\end{lemma}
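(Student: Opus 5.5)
Domain monotonicity of Green functions with pole at infinity gives the bound directly. Green functions increase as the complementary compact set shrinks. So it suffices to find a fixed compact subset $L\subset\widetilde K_N$, independent of $N$, whose Green function $g_L$ is bounded on $\{|z|\le4\}$. The natural choice is a circular arc contained in every $\widetilde K_N$. The radii $r_1=\tfrac12+\tfrac{1}{4(N+1)}$ depend on $N$, so we cannot simply take $L\subset K_1$. Instead we use the connector structure and the shape of $K_1$: since $\alpha_1=\pi$, the arc $K_1$ contains
\[
L_N=\{r_1e^{i\theta}: |\theta|\le \pi-\eta_0\}.
\]
This is a rotated copy of one fixed arc of opening $2(\pi-\eta_0)$, scaled by $r_1\in[1/2,5/8]$.

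\textbf{Step 1 (monotonicity).} Since $L_N\subset\widetilde K_N$, we have $\Omega_N\subset\C\setminus L_N$. The function $g_{\C\setminus L_N}-g_N$ is harmonic on $\Omega_N$, including at $\infty$, because both functions have the singularity $\log|z|$ there. It is nonnegative at regular boundary points of $\widetilde K_N$, where $g_N\to0$. Every point of a continuum is regular, so the maximum principle gives $g_N\le g_{\C\setminus L_N}$ on $\Omega_N$.

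\textbf{Step 2 (uniform bound for arcs).} By scaling, $g_{\C\setminus L_N}(z)=g_{\C\setminus L}(z/r_1)$, where $L=\{e^{i\theta}:|\theta|\le\pi-\eta_0\}$ is fixed. The point $z/r_1$ lies in $\{|w|\le 8\}$. The function $g_{\C\setminus L}$ is continuous on $\C$ once extended by $0$ on $L$, again because $L$ is a continuum and hence every point of it is regular. A continuous function is bounded on the compact disk $\{|w|\le8\}$, so we may set $C_G=\max_{|w|\le8}g_{\C\setminus L}(w)$. As a cruder alternative that avoids any continuity argument, one can use $g_{\C\setminus L}(w)\le\log|w|+\log\tfrac1{\capacity(L)}+\text{const}$ for large $|w|$, combined with the maximum principle on $\{|w|<R\}\setminus L$. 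Or one can write the formula explicitly: $g_{\C\setminus L}=\log|\phi|$, where $\phi$ is the explicit map for a circular arc.

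\textbf{Main obstacle.} The only delicate point is justifying boundary regularity, so that the comparison in Step 1 is legitimate and the continuity in Step 2 holds. Both follow from the standard fact that continua are non-thin at each of their points. Alternatively, one can compare directly with the explicit Green function of a circular arc, which is plainly continuous up to the arc. Combining Steps 1 and 2 gives $g_N(z)\le C_G$ for all $N$ and all $z\in\Omega_N$ with $|z|\le4$.
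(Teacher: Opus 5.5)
Your proof is correct, but it takes a different route from the paper's.

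The paper compares no Green functions. It uses the equilibrium-measure representation $g_N(z)=\int\log|z-\xi|\,d\mu_N(\xi)-\log\capacity(\widetilde K_N)$. Since $\widetilde K_N\subset\overline{D(0,3/4)}$, the integral is at most $\log 5$ for $|z|\le4$. It then quotes $\capacity(\widetilde K_N)\ge c$ from Lemma~\ref{lem:maze-tree-corridor} and gets the explicit constant $C_G=\log5-\log c$.

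You instead apply monotonicity directly to Green functions. You compare $g_N$ with the Green function of $K_1$, which is exactly the scaled arc $r_1L$. Uniformity in $N$ then comes from scaling and from the boundedness of the single fixed function $g_{\C\setminus L}$ on $\{|w|\le8\}$.

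Both arguments use the same geometric input: a copy of one fixed arc, at scale bounded below, inside every $\widetilde K_N$. The paper routes it through monotonicity of capacity; you route it through the minimum principle.

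The paper's version is shorter, gives an explicit constant, and needs no discussion of boundary regularity. Yours needs regularity of the continua $\widetilde K_N$ (Step 1) and $L$ (Step 2). Both facts are standard, but your constant is explicit only if you write out the arc's Green function.

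The scaling step can also be avoided. The connector segment $\{ri: 7/12\le r\le 2/3\}$ lies in $\widetilde K_N$ for every $N\ge2$, so it is already a fixed comparison set independent of $N$.
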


\begin{proof}
Let $\mu_N$ be the equilibrium measure of $\widetilde K_N$. The Green function has the equilibrium-potential representation
\[
        g_N(z)=\int\log|z-\xi|\,d\mu_N(\xi)-\log\capacity(\widetilde K_N).
\]
For $|z|\leq4$ and $\xi\in\widetilde K_N\subset\{|z|\leq3/4\}$,
\[
        |z-\xi|\leq4+\frac34<5.
\]
Thus
\[
        \int\log|z-\xi|\,d\mu_N(\xi)\leq\log5.
\]
By Lemma~\ref{lem:maze-tree-corridor},
\[
        \capacity(\widetilde K_N)\geq c.
\]
Therefore
\[
        g_N(z)\leq\log5-\log c=:C_G.
\]
\end{proof}

The next estimate is the only boundary regularity input used in the lower bound. We isolate the precise external theorem first.

\begin{theorem}[Beurling projection estimate, annular harmonic-measure form]\label{thm:beurling-annular}
There is a universal constant $C_{\mathrm B}>0$ with the following property. Let $0<r<1/4$, let $E\subset\overline\D$ be a continuum meeting both circles $|\zeta|=2r$ and $|\zeta|=1$, and let $W$ be the component of $\D\setminus E$ containing a point $x$ with $|x|=r$. Then
\[
        \omega(x,\partial\D,W)\leq C_{\mathrm B} r^{1/2}.
\]
Here $\omega(x,\partial\D,W)$ denotes harmonic measure of the part of $\partial W$ lying on $\partial\D$, evaluated in $W$ at $x$.
\end{theorem}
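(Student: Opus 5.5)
The plan is to reduce to Beurling's projection theorem, via circular projection and comparison with a slit disk. Let $E^*=\{-|\zeta|:\zeta\in E\}$ be the circular projection of $E$ onto the negative real axis. Since $E$ is a continuum meeting $|\zeta|=2r$ and $|\zeta|=1$, the map $\zeta\mapsto|\zeta|$ sends it onto an interval containing $[2r,1]$. Hence $E^*\supset[-1,-2r]$.

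The first step is monotonicity. Harmonic measure of $\partial\D$ only decreases when further obstacles are added to the domain. Therefore $\omega(x,\partial\D,W)\le\omega(x,\partial\D,\D\setminus E)$, where the latter is understood as $\omega(x,\partial\D\cap\partial W,W)$ extended by $0$ off $W$. This reduces the problem to the complement of $E$ in the disk.

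The second step invokes Beurling's projection theorem in the form for the disk. For a closed set $F\subset\overline\D$ and a point $x$ with $|x|=r$, rotated so that $x=r>0$, one has
\[
\omega(x,\partial\D,\D\setminus F)\le\omega(r,\partial\D,\D\setminus F^*),
\]
where $F^*$ is the circular projection onto the negative axis. Applying this with $F=E$ reduces us to the explicit domain $\D\setminus[-1,-2r]$, with the evaluation point at $+r$.

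The third step is the explicit computation, which I expect to be the only real work. One can map $\D\setminus[-1,-2r]$ by a square-root-type conformal map, or compare it with $\D\setminus[-1,0]$ rescaled. A cleaner route is to note that $\omega(r,\partial\D,\D\setminus[-1,-2r])$ is at most the probability that Brownian motion from $r$ reaches the circle $|\zeta|=1/2$ (say, assuming $r<1/8$) without hitting the slit. Scaling by $1/(2r)$ turns this into escaping from the point $1/2$ to radius $1/(4r)$ in the complement of $(-\infty,-1]$, i.e.\ a slit plane. Under $w=\sqrt{\zeta+1}$ the slit plane becomes a half-plane. There the probability of reaching distance $R$ is $O(R^{-1/2})$, because the image point lies at bounded distance from the boundary while the target sits at distance $\sim R^{1/2}$, and half-plane harmonic measure of a far semicircle decays like the inverse distance. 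With $R\sim 1/r$ this gives $Cr^{1/2}$.

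The range $1/8\le r<1/4$ is trivial by enlarging $C_{\mathrm B}$, since harmonic measure is at most $1$. The main obstacle is justifying the projection theorem's applicability when $E$ is merely a continuum and $x$ may lie in a component $W$ not containing the whole disk. Monotonicity of harmonic measure in the domain handles this, so I would cite Beurling's theorem (e.g.\ Garnett--Marshall, Ch.~III) rather than reprove it.
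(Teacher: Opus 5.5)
Your proposal is correct and takes essentially the paper's route: the paper gives no argument beyond quoting this as the Beurling--Nevanlinna projection theorem after scaling and rotation (Ahlfors, Ch.~III; Garnett--Marshall, p.~105), and your projection onto $[-1,-2r]$ followed by the square-root computation in the slit plane just makes explicit the extremal slit-disk estimate that the citation leaves implicit. One small correction: your ``first step'' is an equality, not a monotonicity, since harmonic measure in $\D\setminus E$ at $x$ is computed in $W$ by definition; the monotonicity you actually need is the passage from $\D\setminus E^*$ to the larger domain $\D\setminus[-1,-2r]$, which holds because $E^*\supset[-1,-2r]$.
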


This is the Beurling--Nevanlinna projection theorem after scaling and rotation; see Ahlfors \cite[Chapter III, Section B, pp. 43--44]{Ahlfors1973} or Garnett--Marshall \cite[p. 105]{GarnettMarshall2005}. The theorem is quoted exactly in the form used below. Its constant is universal; it does not depend on the shape, length, number of components, or smoothness of any larger set containing $E$.

\begin{lemma}[Domain monotonicity for the outer-side harmonic measure]\label{lem:harmonic-measure-monotonicity}
Let $U\subset V\subset D(z_0,r_0)$ be domains, let $z\in U$, and assume that the distinguished boundary side is the same outer circle $\partial D(z_0,r_0)$. Then
\[
        \omega(z,\partial D(z_0,r_0),U)
        \leq
        \omega(z,\partial D(z_0,r_0),V).
\]
Here the harmonic measure means the Perron solution with boundary value $1$ on the part of the boundary lying on $\partial D(z_0,r_0)$ and boundary value $0$ on the remaining regular boundary.
\end{lemma}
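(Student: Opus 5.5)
We plan to prove Lemma~\ref{lem:harmonic-measure-monotonicity} by the Perron method, comparing upper classes. Write $\Gamma=\partial D(z_0,r_0)$. Let $h_V=\omega(\cdot,\Gamma,V)$ be the Perron solution in $V$ with boundary data $\varphi_V=1$ on $\partial V\cap\Gamma$ and $0$ on $\partial V\setminus\Gamma$. Define $h_U$ similarly. Both are harmonic and take values in $[0,1]$, since the constants $0$ and $1$ are a subsolution and a supersolution respectively.

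The cleanest route is through the lower Perron class. Let $u$ be any subharmonic function on $U$ in the lower class for $\varphi_U$: it is bounded above and satisfies
\[
\limsup_{z\to\zeta,\,z\in U}u(z)\le\varphi_U(\zeta)\quad\text{for all }\zeta\in\partial U .
\]
We first replace $u$ by $\max(u,0)$, which is still in the lower class since $\varphi_U\ge0$. We then extend $u$ to $V$ by setting $\tilde u=u$ on $U$ and $\tilde u=0$ on $V\setminus U$.

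The key step is to check that $\tilde u$ is subharmonic on $V$. This is only an issue at points $\zeta\in\partial U\cap V$. Such a point is not on $\Gamma$, because $V\subset D(z_0,r_0)$ is open, so $\varphi_U(\zeta)=0$. Hence $\limsup u\le0$ at $\zeta$, and $\tilde u$ is upper semicontinuous there. The sub-mean-value property at $\zeta$ holds trivially, because $\tilde u(\zeta)=0\le\tilde u$ everywhere. This is the standard gluing lemma for a nonnegative subharmonic function that vanishes on the relative boundary.

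Next we check boundary behaviour on $\partial V$. At $\zeta\in\partial V\cap\Gamma$ we have $\limsup\tilde u\le1=\varphi_V(\zeta)$. At $\zeta\in\partial V\setminus\Gamma$ there are two cases. If $\zeta\notin\overline U$, then $\tilde u=0$ near $\zeta$. If $\zeta\in\partial U\setminus\Gamma$, then $\varphi_U(\zeta)=0$, so $\limsup\le0$. In every case $\tilde u$ lies in the lower class for $\varphi_V$, and therefore $u\le\tilde u\le h_V$ on $U$. Taking the supremum over all such $u$ gives $h_U\le h_V$ on $U$, which is the claim at $z$.

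The main obstacle is only bookkeeping. The phrase ``$0$ on the remaining regular boundary'' must be read as Perron data $\varphi=\mathbf 1_\Gamma$ on all of $\partial U$; irregular points form a polar set and do not affect the Perron solution. Once that interpretation is fixed, the only genuinely used fact is that $\partial U\cap V$ is disjoint from $\Gamma$. If one prefers, the same argument can be run with upper classes: restrict a superharmonic majorant from $V$ to $U$ and note that it is $\ge0$ on $\partial U\cap V$.
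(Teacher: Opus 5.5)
Your proof is correct, but it runs the Perron argument on the opposite side from the paper. The paper works in the upper class. It takes the single function $h_V=\omega(\cdot,\partial D(z_0,r_0),V)$, restricts it to $U$, notes that $h_V\ge 0$ on $\partial U\cap V$ and that $h_V$ has value $1$ on $\partial U\cap\partial D(z_0,r_0)$, and calls it a Perron majorant for $U$.

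You instead work in the lower class. You truncate each subharmonic minorant at $0$ and glue it by zero across $\partial U\cap V$ to get a minorant on $V$.

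The paper's route is a one-line observation. However, its assertion that $h_V$ has boundary value $1$ on $\partial U\cap\partial D(z_0,r_0)$ tacitly requires $h_V$ to attain its data there. That fails wherever the zero-data part of $\partial V$ accumulates on the circle, for instance at the point where a slit of $\partial V$ meets the circle: there the lower limit of $h_V$ is $0$. So a polar-exceptional-set adjustment is implicitly needed. This is harmless in the application, where $L$ meets $\partial D(z_0,r_0)$ in finitely many points.

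Your gluing argument needs no boundary behaviour of $h_V$ at all. It uses only that $\partial U\cap V$ misses the circle, so it proves the lemma for arbitrary nested domains exactly as stated. Two smaller points:

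\begin{itemize}
\item Your main argument uses the lower Perron function. This agrees with the paper's ``least majorant'' definition because the data $\mathbf 1_{\partial D(z_0,r_0)}$ are bounded and Borel, hence resolutive.
\item Your closing variant restricts every member $v$ of the upper class of $V$ to $U$, using $v\ge 0$ by the minimum principle. That is the form of the paper's route which keeps the same robustness and matches its definition directly.
\end{itemize}
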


\begin{proof}
Let $h_V(w)=\omega(w,\partial D(z_0,r_0),V)$. The function $h_V$ is harmonic in $V$, satisfies $0\leq h_V\leq1$, and has boundary limit $1$ on the outer circular side and $0$ on the remaining regular part of $\partial V$. Restricted to $U$, it is harmonic. On the outer circular side of $\partial U$ its boundary value is $1$. On every other boundary point of $U$ it has nonnegative boundary lower limit. Therefore $h_V$ is a Perron majorant for the Dirichlet problem in $U$ with boundary data equal to $1$ on the outer circle and $0$ elsewhere. The Perron solution in $U$ is the least such majorant, so it is at most $h_V(z)$. This gives the stated inequality.
\end{proof}

\begin{lemma}[Boundary square-root estimate]\label{lem:boundary-square-root}
There is an absolute constant $C_B$ with the following property. For every $N\geq2$ and every $z\in\Omega_N\cap\D$,
\[
        g_N(z)\leq C_B\,\dist(z,\widetilde K_N)^{1/2}.
\]
The constant $C_B$ is independent of the number of arcs in $\widetilde K_N$.
\end{lemma}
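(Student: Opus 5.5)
Fix $z\in\Omega_N\cap\D$ and put $d=\dist(z,\widetilde K_N)$. Let $\xi_0\in\widetilde K_N$ be a nearest point, so $|z-\xi_0|=d$. Since $\widetilde K_N$ lies in $\{1/2\le|\zeta|\le3/4\}$ and $|z|<1$, we have $d\le 7/4$. The plan splits into two cases. When $d$ is bounded below by a fixed absolute constant $d_0$ (say $d_0=1/100$), Lemma~\ref{lem:green-upper-bound} gives directly
\[
g_N(z)\le C_G\le C_G d_0^{-1/2}d^{1/2},
\]
so we only need to treat $d<d_0$.

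For small $d$, we work in the disk $D(\xi_0,R)$ with a fixed radius $R$ (take $R=1/8$). The key point is that the component of $\widetilde K_N$ through $\xi_0$ leaves this disk. Indeed, $\widetilde K_N$ is connected by Lemma~\ref{lem:maze-tree-corridor}, and its diameter is at least $c$; shrinking $R$ below $c/2$ if necessary, $\widetilde K_N$ is not contained in $D(\xi_0,R)$. Hence the component $E'$ of $\widetilde K_N\cap\overline{D(\xi_0,R)}$ containing $\xi_0$ is a continuum meeting both $\{\xi_0\}$ and $\partial D(\xi_0,R)$, by the boundary bumping theorem for continua. Let $W$ be the component of $D(\xi_0,R)\setminus\widetilde K_N$ containing $z$, and let $W'$ be the component of $D(\xi_0,R)\setminus E'$ containing $z$; then $W\subset W'$. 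On $\partial W$, $g_N$ vanishes on the part lying in $\widetilde K_N$ (the tree is a regular set, being a nondegenerate continuum) and is at most $C_G$ on the part lying on $\partial D(\xi_0,R)\subset\{|\zeta|\le4\}$ (Lemma~\ref{lem:green-upper-bound}). The maximum principle therefore yields
\[
g_N(z)\le C_G\,\omega(z,\partial D(\xi_0,R),W),
\]
and Lemma~\ref{lem:harmonic-measure-monotonicity} bounds this by $C_G\,\omega(z,\partial D(\xi_0,R),W')$.

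We then rescale by $\zeta\mapsto(\zeta-\xi_0)/R$. This sends $E'$ to a continuum $E\subset\overline\D$ through $0$ meeting $|\zeta|=1$, and sends $z$ to a point $x$ with $|x|=d/R=:r$. To meet the hypothesis ``$E$ meets $|\zeta|=2r$'', note that $E$ contains $0$ and meets the unit circle, so by connectedness it meets every circle $|\zeta|=2r$ with $2r<1$; this holds once $d<d_0\le R/2$. Theorem~\ref{thm:beurling-annular} then gives $\omega\le C_{\mathrm B}(d/R)^{1/2}$ once $d/R<1/4$. Combining the two cases gives $C_B=\max(C_G d_0^{-1/2},\,C_GC_{\mathrm B}R^{-1/2})$, which is independent of $N$.

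The main obstacle is the justification of the maximum-principle comparison on the possibly irregular domain $W$. The boundary $\partial W$ mixes pieces of the tree, including both sides of arcs and points near arc endpoints, with pieces of the circle. I would handle this by noting that every point of $\widetilde K_N$ is regular for the Dirichlet problem, since it lies on a nondegenerate continuum. Thus $g_N$ extends continuously by $0$ to $\widetilde K_N$. One can then compare $g_N$ with $C_G$ times the Perron solution via the Perron-majorant argument used in Lemma~\ref{lem:harmonic-measure-monotonicity}.
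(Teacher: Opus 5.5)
Your proof is correct and follows the paper's argument essentially step for step: a nearest point, the trivial case of large distance via Lemma~\ref{lem:green-upper-bound}, a crossing continuum obtained from connectedness and the diameter bound, the comparison $g_N\le C_G\,\omega$ followed by domain monotonicity (Lemma~\ref{lem:harmonic-measure-monotonicity}), and the quoted Beurling estimate (Theorem~\ref{thm:beurling-annular}) after affine normalization. The differences are cosmetic: you take the component of $\widetilde K_N\cap\overline{D(\xi_0,R)}$ through $\xi_0$ (boundary bumping), whereas the paper extracts a continuum in the annulus $\{2\delta\le|w-z_0|\le r_0\}$; and you justify boundary regularity by the general fact for nondegenerate continua plus a direct Perron comparison, whereas the paper uses local square-root barriers and an exhaustion.
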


\begin{proof}
Let $K=\widetilde K_N$. We first record why no hidden boundary regularity is being used. The set $K$ is a finite union of analytic arcs. If $q\in K$, then in a sufficiently small disk around $q$ one of these arcs contains either a full analytic subarc through $q$ or a one-sided analytic subarc ending at $q$. In a local analytic coordinate $\zeta$ sending $q$ to $0$, this subarc contains a straight segment or a half-segment. In the complement of the segment $[-1,1]$, the function
\[
        b(\zeta)=\operatorname{Re}\sqrt{\zeta^2-1}
\]
with the branch positive at infinity is a positive harmonic barrier tending to $0$ on the segment; after scaling and, in the half-segment case, reflecting across the analytic continuation of the arc, the same square-root barrier works locally. Pulling the barrier back by the analytic coordinate gives a local barrier at $q$ for $\C\setminus K$. Thus every boundary point of $\Omega_N$ is regular, and the Green function $g_N$ extends continuously to $K$ with boundary value $0$.

Fix $z\in\Omega_N\cap\D$, and set
\[
        \delta=\dist(z,K).
\]
Choose $z_0\in K$ such that
\[
        |z-z_0|=\delta.
\]
Since $K$ has diameter bounded below by an absolute constant, there is a point of $K$ at distance at least $c_0$ from $z_0$. Fix a small absolute radius
\[
        r_0<c_0/10.
\]
If $\delta\geq r_0/4$, then the desired estimate follows from Lemma~\ref{lem:green-upper-bound} after increasing $C_B$. So assume
\[
        0<\delta<r_0/4.
\]
Because $K$ is connected and meets both $D(z_0,2\delta)$ and $\C\setminus D(z_0,r_0)$, the compact set
\[
        K\cap\{2\delta\leq |w-z_0|\leq r_0\}
\]
contains a continuum $L$ meeting both circles
\[
        \partial D(z_0,2\delta)
        \quad\text{and}\quad
        \partial D(z_0,r_0).
\]
This follows directly from connectedness: otherwise the two boundary-circle intersections would lie in different compact unions of components of the annular intersection, producing a separation of $K$.

For completeness, here is the elementary separation argument in the form used below. Let $K_-$ be the union of the components of $K\cap\overline{D(z_0,r_0)}$ that meet $\overline{D(z_0,2\delta)}$. If no component of the annular part met both boundary circles, then $K_-$ would be a compact subset of $K$ separated by a positive Euclidean distance from the nonempty compact set $K\setminus D(z_0,r_0)$. Since $K$ is compact, this would split $K$ into two disjoint relatively closed nonempty pieces, contradicting connectedness. Thus one component gives the required continuum $L$. Notice also that $z$ lies strictly inside the inner circle $|w-z_0|=2\delta$, since $|z-z_0|=\delta$.

Let $U$ be the component of
\[
        D(z_0,r_0)\setminus K
\]
which contains $z$. Let $V$ be the component of
\[
        D(z_0,r_0)\setminus L
\]
which contains $z$. Since $L\subset K$, we have
\[
        U\subset V.
\]
We now normalize the disk in order to apply Theorem~\ref{thm:beurling-annular}. Put
\[
        \zeta=\frac{w-z_0}{r_0},
        \qquad x=\frac{z-z_0}{r_0},
        \qquad E=\frac{L-z_0}{r_0}.
\]
Then $|x|=\delta/r_0<1/4$. The set $E$ is a continuum in $\overline\D$ meeting both circles
\[
        |\zeta|=2\delta/r_0
        \quad\text{and}\quad
        |\zeta|=1.
\]
Let $W$ be the component of $\D\setminus E$ containing $x$. By construction $W$ is the image of $V$ under the same affine map. The hypotheses of Theorem~\ref{thm:beurling-annular} hold with $r=\delta/r_0$, and hence
\[
        \omega(x,\partial\D,W)
        \leq C_{\mathrm B}\left(\frac{\delta}{r_0}\right)^{1/2}.
\]
Harmonic measure is conformally invariant under the affine normalization, so this is exactly
\[
        \omega(z,\partial D(z_0,r_0),V)
        \leq C_{\mathrm B}\left(\frac{\delta}{r_0}\right)^{1/2}.
\]
No feature of $K$ other than the crossing continuum $L$ is used in this step.

By Lemma~\ref{lem:harmonic-measure-monotonicity}, applied to the inclusion $U\subset V$,
\[
        \omega(z,\partial D(z_0,r_0),U)
        \leq \omega(z,\partial D(z_0,r_0),V).
\]
On the circular part $\partial D(z_0,r_0)\cap\partial U$, Lemma~\ref{lem:green-upper-bound} gives
\[
        g_N\leq C_G.
\]
On the compact part $K\cap D(z_0,r_0)$, the Green function has boundary value $0$. More explicitly, let $U_\varepsilon$ be a smooth exhaustion of $U$ obtained by deleting the $\varepsilon$-neighborhood of $K$ and staying a distance $\varepsilon$ away from the circular boundary except on a smooth approximation of the circular side. The harmonic measure representation in $U_\varepsilon$, together with $0\leq g_N\leq C_G$ on $|w-z_0|=r_0$ and $g_N\to0$ uniformly on compact pieces approaching the regular boundary portion $K$, gives
\[
        g_N(z)\leq C_G\,\omega_{U_\varepsilon}(z,\partial D(z_0,r_0)\cap\partial U_\varepsilon)+o(1).
\]
Letting $\varepsilon\downarrow0$ yields
\[
        g_N(z)\leq C_G\,\omega(z,\partial D(z_0,r_0),U).
\]
Combining the last three inequalities,
\[
        g_N(z)
        \leq C\left(\frac{\delta}{r_0}\right)^{1/2}
        \leq C_B\delta^{1/2}.
\]
This proves the lemma.
\end{proof}

\section{Green level curves and Faber polynomials}\label{sec:faber-levels}

For $R>1$, define
\[
        \Gamma_R=\phi_N^{-1}(\{|w|=R\}).
\]
Since $\phi_N^{-1}$ is conformal in $|w|>1$, the curve $\Gamma_R$ is a real-analytic Jordan curve contained in $\Omega_N$. It surrounds $\widetilde K_N$. No integral below is ever taken on $\partial\widetilde K_N$.

Write the Laurent expansion of $\phi_N$ at infinity as
\[
        \phi_N(z)=\frac{z}{\capacity(\widetilde K_N)}+\alpha_0+\frac{\alpha_1}{z}+\cdots.
\]
For $m\geq1$, define the $m$-th Faber polynomial $F_m$ to be the polynomial part of the Laurent expansion of $\phi_N(z)^m$ at infinity:
\[
        \phi_N(z)^m=F_m(z)+O(z^{-1}),\qquad z\to\infty.
\]
This definition uses only the expansion at infinity.

\begin{lemma}[Faber contour identities]\label{lem:faber-contour-identities}
Let $R>1$, and let $\Gamma_R=\phi_N^{-1}(|w|=R)$, oriented positively around $\widetilde K_N$. If $z$ lies in the bounded component of $\C\setminus\Gamma_R$, then
\[
        F_m(z)=\frac1{2\pi i}\int_{\Gamma_R}
        \frac{\phi_N(\zeta)^m}{\zeta-z}\,d\zeta.
\]
If $z$ lies in the unbounded component of $\C\setminus\Gamma_R$, then
\[
        F_m(z)-\phi_N(z)^m=
        \frac1{2\pi i}\int_{\Gamma_R}
        \frac{\phi_N(\zeta)^m}{\zeta-z}\,d\zeta.
\]
\end{lemma}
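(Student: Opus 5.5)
The plan is to apply the Cauchy integral formula on a large circle to the function $h=\phi_N^m-F_m$. This function is holomorphic in the unbounded component of $\C\setminus\Gamma_R$, including at infinity, where it is $O(z^{-1})$.

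Fix $R>1$ and choose $\rho$ so large that $\Gamma_R\subset D(0,\rho)$ and $|z|<\rho$. The curve $\Gamma_R$ is a real-analytic Jordan curve surrounding $\widetilde K_N$. Its exterior $\{\zeta:|\phi_N(\zeta)|>R\}$ is the unbounded component, and its interior is the bounded component, which contains $\widetilde K_N$. Note that $\phi_N$ is defined only on $\Omega_N$, so it cannot be integrated over the interior. We therefore work in the closed annular region $A$ bounded by $\Gamma_R$ and $|\zeta|=\rho$, where $\phi_N^m$ is holomorphic in a neighborhood.

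We split $\phi_N^m=F_m+h$, with $F_m$ entire and $h(\zeta)=O(\zeta^{-1})$ as $\zeta\to\infty$. We write
\[
\frac1{2\pi i}\int_{\Gamma_R}\frac{\phi_N(\zeta)^m}{\zeta-z}\,d\zeta
=\frac1{2\pi i}\int_{\Gamma_R}\frac{F_m(\zeta)}{\zeta-z}\,d\zeta
+\frac1{2\pi i}\int_{\Gamma_R}\frac{h(\zeta)}{\zeta-z}\,d\zeta .
\]
Since $F_m$ is entire, the Cauchy formula and Cauchy's theorem give the first term as $F_m(z)$ when $z$ is inside $\Gamma_R$ and as $0$ when $z$ is outside.

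For the second term we use that $h$ is holomorphic on a neighborhood of $A$. Cauchy's theorem on $A$ (positively oriented boundary $=\{|\zeta|=\rho\}-\Gamma_R$) gives
\[
\frac1{2\pi i}\int_{\Gamma_R}\frac{h}{\zeta-z}\,d\zeta=\frac1{2\pi i}\int_{|\zeta|=\rho}\frac{h}{\zeta-z}\,d\zeta-\chi(z)\,h(z),
\]
where $\chi(z)=1$ if $z\in\operatorname{int}A$, that is, if $z$ lies in the unbounded component of $\C\setminus\Gamma_R$ with $|z|<\rho$, and $\chi(z)=0$ if $z$ lies in the bounded component. The integral over $|\zeta|=\rho$ is bounded by $\rho\cdot O(\rho^{-1})\cdot(\rho-|z|)^{-1}\to0$ as $\rho\to\infty$. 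Since the left side does not depend on $\rho$, that integral vanishes identically.

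Summing the two terms: for $z$ inside $\Gamma_R$ we get $F_m(z)+0=F_m(z)$. For $z$ outside $\Gamma_R$ we get $0-h(z)=F_m(z)-\phi_N(z)^m$. These are the two stated identities.

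The only step needing care is the topological bookkeeping: orientation, and the fact that $\Gamma_R$ separates the plane with $\widetilde K_N$ in the bounded component. This follows because $\phi_N^{-1}$ maps $\{|w|>R\}$ onto the unbounded component and $\{1<|w|<R\}\cup\widetilde K_N$ fills the bounded one. The decay estimate for the large circle is routine.
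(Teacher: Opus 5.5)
Your proof is correct and follows essentially the same route as the paper: apply Cauchy's theorem on the annular region between $\Gamma_R$ and a large outer contour, then send the outer contour to infinity using $\phi_N^m=F_m+O(\zeta^{-1})$. The differences are only cosmetic: you subtract the polynomial part $F_m$ before applying Cauchy's theorem and use a round circle, whereas the paper works with $\phi_N^m$ directly on the level curve $\Gamma_M$ and splits off $F_m$ only in the limit.
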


\begin{proof}
Choose $M>R$, and put
\[
        \Gamma_M=\phi_N^{-1}(|w|=M).
\]
The curves $\Gamma_R$ and $\Gamma_M$ bound an annular subdomain of $\Omega_N$. Its positively oriented boundary is
\[
        \Gamma_M-\Gamma_R.
\]
If $z$ lies between $\Gamma_R$ and $\Gamma_M$, then the residue theorem gives
\[
        \frac1{2\pi i}\int_{\Gamma_M}
        \frac{\phi_N(\zeta)^m}{\zeta-z}\,d\zeta
        -
        \frac1{2\pi i}\int_{\Gamma_R}
        \frac{\phi_N(\zeta)^m}{\zeta-z}\,d\zeta
        =\phi_N(z)^m.
\]
If $z$ lies inside $\Gamma_R$, there is no pole in the annulus, so the same difference is $0$.

Let $M\to\infty$. Since
\[
        \phi_N(\zeta)^m=F_m(\zeta)+O(\zeta^{-1}),
\]
the integral over $\Gamma_M$ tends to $F_m(z)$. Indeed, the Cauchy integral of the polynomial $F_m$ over a sufficiently large circle is $F_m(z)$, while each negative Laurent term has zero Cauchy integral in the limit. This proves both identities.
\end{proof}

Next we estimate the length of the level curve.

\begin{lemma}[Level-curve length]\label{lem:level-curve-length}
For $0<\lambda\leq1$, let
\[
        \Gamma_\lambda=\{z\in\Omega_N:g_N(z)=\lambda\}.
\]
Then
\[
        \HH^1(\Gamma_\lambda)\leq C\lambda^{-1/2}.
\]
\end{lemma}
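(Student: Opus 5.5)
The plan is to work on the $w$-side of the conformal map and obtain the bound from Cauchy--Schwarz plus an area estimate. Put $\psi_N=\phi_N^{-1}:\{|w|>1\}\to\Omega_N$ and $\rho=e^\lambda$. Since $g_N=\log|\phi_N|$, we have $\Gamma_\lambda=\psi_N(\{|w|=\rho\})$, so
\[
        \HH^1(\Gamma_\lambda)=\int_0^{2\pi}|\psi_N'(\rho e^{i\theta})|\,\rho\,d\theta .
\]
Pointwise bounds on $|\psi_N'|$ are not enough. The Koebe quarter theorem gives only $|\psi_N'(w)|(|w|-1)\le 4\dist(\psi_N(w),\widetilde K_N)\le C$, hence $\HH^1(\Gamma_\lambda)\le C\lambda^{-1}$. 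The square-root gain must come from averaging, with the Dirichlet integral of $\psi_N$ controlled by an area bound.

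\textbf{Area bound.} Domain monotonicity, as in the proof of Lemma~\ref{lem:harnack-lower-bound}, gives $g_N(z)\ge\log(4|z|/3)$ for $|z|>3/4$. Hence $\{z\in\Omega_N: g_N(z)<2\}\subset D(0,3e^2/4)$. Because $\psi_N$ is injective, the area formula then gives
\[
        \int_{1<|w|<e^2}|\psi_N'(w)|^2\,dA(w)\le \pi(3e^2/4)^2=:A_0 .
\]

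\textbf{Averaging.} Fix $0<\lambda\le1$ and $|w|=\rho$. The disk $D_w=D(w,(\rho-1)/2)$ lies in $\{1<|w|<e^2\}$, and its radius is comparable to $\lambda$ because $\rho-1\asymp\lambda$ for $\lambda\le1$. Since $|\psi_N'|^2$ is subharmonic, the sub-mean-value property gives
\[
        |\psi_N'(w)|^2\le\frac{C}{\lambda^2}\int_{D_w}|\psi_N'|^2\,dA .
\]
We integrate this in $\theta$ over $|w|=\rho$ and apply Fubini. A point $u$ lies in $D_{\rho e^{i\theta}}$ only when $u$ is in the annulus $\{\rho-(\rho-1)/2<|u|<\rho+(\rho-1)/2\}$ and $\theta$ lies in an arc of length $O(\lambda)$. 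Therefore
\[
        \int_0^{2\pi}|\psi_N'(\rho e^{i\theta})|^2\,d\theta\le \frac{C}{\lambda^2}\cdot C\lambda\cdot A_0=\frac{C'}{\lambda}.
\]

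\textbf{Conclusion.} Cauchy--Schwarz with $\rho\le e$ gives
\[
        \HH^1(\Gamma_\lambda)^2\le 2\pi\rho^2\int_0^{2\pi}|\psi_N'(\rho e^{i\theta})|^2\,d\theta\le \frac{C}{\lambda},
\]
which is the claim. The constant is absolute because it depends only on $A_0$ and on numerical constants. The step needing the most care is the Fubini/overlap count: each disk $D_w$ must stay inside the region where the area bound holds, and the angular overlap must be $O(\lambda)$ uniformly in $N$. I expect no further obstacle, since Lemma~\ref{lem:boundary-square-root} is not even needed here.
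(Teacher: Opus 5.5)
Your proof is correct. It keeps the paper's outer frame: write $\Gamma_\lambda=\psi_N(\{|w|=e^\lambda\})$, bound $\int_0^{2\pi}|\psi_N'(\rho e^{i\theta})|^2\,d\theta$ by $C/\lambda$, then apply Cauchy--Schwarz. Where you differ is in how you get the key $L^2$ bound.

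The paper expands $\psi_N(w)=aw+b_0+\sum_{k\geq1}b_kw^{-k}$ and evaluates the circle integral exactly by orthogonality as $2\pi a^2+2\pi\sum_k k^2|b_k|^2R^{-2k-2}$. It then controls this through the exterior area theorem $\sum_k k|b_k|^2\leq a^2$, quoted from Duren, together with $kR^{-2k}\leq C/(R-1)$. That route uses the Laurent normalization and the capacity bound $a=\capacity(\widetilde K_N)\leq C$ from Lemma~\ref{lem:maze-tree-corridor}.

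You instead bound the Dirichlet integral of $\psi_N$ over the fixed annulus $1<|w|<e^2$ by the area of its image. This needs only $\widetilde K_N\subset\overline{D(0,3/4)}$ and domain monotonicity. You then pass from area to the circle $|w|=e^\lambda$ using the sub-mean-value inequality for $|\psi_N'|^2$ on disks of radius $r=(\rho-1)/2$, which is comparable to $\lambda$. A Fubini overlap count finishes it: each $u$ is covered for a $\theta$-set of measure at most $2\arcsin(r/\rho)\leq\pi r$.

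Both arguments rest on the same area principle, since the area theorem is itself the nonnegativity of an enclosed area. Yours is self-contained, avoids coefficients and capacity, and works unchanged for any conformal map of $\{|w|>1\}$ whose image of a fixed annulus has bounded area. The paper's is shorter once the area theorem is quoted, and it gives an exact formula for the circle integral.
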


\begin{proof}
Let
\[
        \psi_N=\phi_N^{-1}.
\]
Write its Laurent expansion in $|w|>1$ as
\[
        \psi_N(w)=aw+b_0+\frac{b_1}{w}+\frac{b_2}{w^2}+\cdots,
\]
where
\[
        a=\capacity(\widetilde K_N).
\]
By Lemma~\ref{lem:maze-tree-corridor},
\[
        0<c\leq a\leq C.
\]
The exterior area theorem for schlicht functions in the exterior disk, applied to $a^{-1}\psi_N$ \cite[Theorem 2.1]{Duren1983}, gives
\[
        \sum_{k\geq1}k|b_k|^2\leq C.
\]
The constant here is absolute: the normalization divides by $a=\capacity(\widetilde K_N)$, and Lemma~\ref{lem:maze-tree-corridor} keeps $a$ between two fixed positive constants. Thus no dependence on $N$ is hidden in this coefficient estimate.
Let
\[
        R=e^\lambda.
\]
Then
\[
        \Gamma_\lambda=\psi_N(\{|w|=R\}).
\]
Thus
\[
        \HH^1(\Gamma_\lambda)=R\int_0^{2\pi}|\psi_N'(Re^{it})|\,dt.
\]
Since
\[
        \psi_N'(w)=a-\sum_{k\geq1}kb_kw^{-k-1},
\]
orthogonality gives
\[
        \int_0^{2\pi}|\psi_N'(Re^{it})|^2\,dt
        =2\pi a^2+2\pi\sum_{k\geq1}k^2|b_k|^2R^{-2k-2}.
\]
For $R>1$,
\[
        kR^{-2k}\leq\frac{C}{R-1}.
\]
Therefore
\[
        \sum_{k\geq1}k^2|b_k|^2R^{-2k-2}
        \leq
        \frac{C}{R-1}\sum_{k\geq1}k|b_k|^2
        \leq\frac{C}{R-1}.
\]
Hence
\[
        \int_0^{2\pi}|\psi_N'(Re^{it})|^2\,dt
        \leq C\left(1+\frac1{R-1}\right).
\]
By Cauchy--Schwarz,
\[
        \HH^1(\Gamma_\lambda)
        \leq CR\left(1+\frac1{R-1}\right)^{1/2}.
\]
Since $R=e^\lambda$ and $0<\lambda\leq1$,
\[
        R-1\asymp\lambda,
\]
so
\[
        \HH^1(\Gamma_\lambda)\leq C\lambda^{-1/2}.
\]
\end{proof}

\section{The Faber separator}\label{sec:faber-separator}

Set
\[
        g_0=g_N(0).
\]
By Lemma~\ref{lem:harnack-lower-bound},
\[
        g_0\geq e^{-CN^2},
\]
and by Lemma~\ref{lem:green-upper-bound},
\[
        g_0\leq C_G.
\]
Because $D(0,1/2)\cap\widetilde K_N=\varnothing$, the function $g_N$ is positive and harmonic in $D(0,1/2)$. Harnack's inequality in that disk gives an absolute constant $c_H>0$ such that
\[
        g_N(z)\geq c_Hg_0,
        \qquad |z|\leq1/4.
\]
Choose once and for all
\[
        \theta=\frac12\min\left\{c_H,\frac{\log(4/3)}{C_G},\frac1{4C_G}\right\},
        \qquad
        \lambda=\theta g_0.
\]
Then $0<\theta<1$, $0<\lambda\leq1/4$, and $\lambda<\log(4/3)$.

\begin{lemma}[Green-level geometry]\label{lem:green-level-geometry}
Let
\[
        \Gamma=\Gamma_\lambda=\{z\in\Omega_N:g_N(z)=\lambda\}.
\]
Then
\[
        \Gamma\subset\D,
        \qquad
        \dist(\Gamma,\widetilde K_N)\geq c\lambda^2,
        \qquad
        \dist(0,\Gamma)\geq\frac14.
\]
Moreover, $0$ lies in the unbounded component of $\C\setminus\Gamma$.
\end{lemma}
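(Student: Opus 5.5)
Each claim comes from one earlier lemma applied to the fixed choice $\lambda=\theta g_0$.

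\emph{Containment $\Gamma\subset\D$.} Since $\widetilde K_N\subset\overline{D(0,3/4)}$, domain monotonicity of Green functions gives, for $|z|\geq1$,
\[
        g_N(z)\geq g_{\C\setminus\overline{D(0,3/4)}}(z,\infty)=\log\frac{|z|}{3/4}\geq\log\frac43.
\]
The choice of $\theta$ gives $\lambda<\log(4/3)$, so no point with $|z|\geq1$ lies on $\Gamma$.

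\emph{Distance to the tree.} If $z\in\Gamma$ then $z\in\D\cap\Omega_N$, so Lemma~\ref{lem:boundary-square-root} applies:
\[
        \lambda=g_N(z)\leq C_B\dist(z,\widetilde K_N)^{1/2}.
\]
Hence $\dist(z,\widetilde K_N)\geq\lambda^2/C_B^2$, which is the bound with $c=C_B^{-2}$.

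\emph{Distance from the origin.} For $|z|\leq1/4$, the Harnack estimate recorded before the lemma gives $g_N(z)\geq c_Hg_0$. Since $\theta\leq c_H/2$,
\[
        g_N(z)\geq c_Hg_0>\theta g_0=\lambda,
\]
so $\overline{D(0,1/4)}\cap\Gamma=\varnothing$ and $\dist(0,\Gamma)\geq1/4$.

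\emph{Topology.} This is the step needing the most care. I will use the conformal picture: $\Gamma=\phi_N^{-1}(\{|w|=e^\lambda\})$ is a Jordan curve in $\Omega_N$. The set $\{|w|>e^\lambda\}$ is connected and contains $\infty$, so its preimage $\{g_N>\lambda\}$ is the unbounded component of $\C\setminus\Gamma$. The bounded component is $\widetilde K_N\cup\{0<g_N<\lambda\}$. The origin is not in $\widetilde K_N$, and $g_N(0)=g_0>\theta g_0=\lambda$ because $\theta<1$. Therefore $0$ lies in the unbounded component.

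Note the contrast this records: the forbidden tree is enclosed by $\Gamma$, while $0$ sits outside it. This is possible because the region $\{g_N<\lambda\}$ is a thin neighborhood of the tree that does not reach the origin.
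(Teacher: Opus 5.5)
Your proof is correct and follows the paper's argument essentially step for step. Each part uses the same tool as the paper: domain monotonicity against $\C\setminus\overline{D(0,3/4)}$ for the containment, the square-root boundary estimate for the distance to the tree, the Harnack bound on $|z|\le 1/4$ with $\theta<c_H$, and the conformal description of the two components of $\C\setminus\Gamma$. The one small difference is that you use the comparison $g_N(z)\ge\log(4|z|/3)$ on all of $\{|z|\ge1\}$, which gives $\Gamma\subset\D$ directly; the paper evaluates only on $\partial\D$ and then argues through connectedness of the bounded component $\widetilde K_N\cup\{g_N<\lambda\}$.
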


\begin{proof}
Since $\widetilde K_N\subset\overline{D(0,3/4)}$, domain monotonicity gives, for $|z|=1$,
\[
        g_N(z)
        \geq g_{\C\setminus\overline{D(0,3/4)}}(z,\infty)
        =\log\frac43
        >\lambda.
\]
The curve $\Gamma=\phi_N^{-1}(|w|=e^\lambda)$ is a real-analytic Jordan curve. Its bounded complementary component is
\[
        \widetilde K_N\cup\{z\in\Omega_N:g_N(z)<\lambda\}.
\]
This set is connected and contains $\widetilde K_N\subset\D$. If it met $\C\setminus\D$, it would also meet $\partial\D$; but no point of $\partial\D$ belongs to the tree or to the sublevel set $g_N<\lambda$. Hence the bounded component lies in $\D$, and so $\Gamma\subset\D$.

For $\zeta\in\Gamma$, Lemma~\ref{lem:boundary-square-root} applies and gives
\[
        \lambda=g_N(\zeta)
        \leq C_B\dist(\zeta,\widetilde K_N)^{1/2}.
\]
Thus
\[
        \dist(\zeta,\widetilde K_N)\geq C_B^{-2}\lambda^2,
\]
which is the stated separation estimate.

Finally, if $|z|\leq1/4$, then $g_N(z)\geq c_Hg_0>\lambda$, because $\theta<c_H$. Hence $\Gamma$ is disjoint from $D(0,1/4)$. Since $g_N(0)=g_0>\lambda$, the origin is not in the bounded component described above; it is in the unbounded component.
\end{proof}

\begin{proposition}[Quantitative Faber separator]\label{prop:quantitative-separator}
Let $g_0=g_N(0)$, $\lambda=\theta g_0$, and $F_m$ be the $m$-th Faber polynomial associated to $\widetilde K_N$. There is an absolute constant $C$ such that, for every integer $m\geq1$,
\[
        |F_m|_{\widetilde K_N}\leq C\lambda^{-5/2}e^{m\lambda},
\]
\[
        |F_m(0)-\phi_N(0)^m|\leq C\lambda^{-1/2}e^{m\lambda},
\]
and
\[
        \sup_{|z|\leq2}|F_m(z)|\leq e^{Cm}.
\]
Consequently, after choosing an absolute constant $A$ sufficiently large and setting
\[
        m=\left\lceil A\frac{\log(1/\lambda)}{g_0}\right\rceil,
\]
the polynomial
\[
        G_N(z)=2\left(1-\frac{F_m(z)}{F_m(0)}\right)
        =\sum_{k=1}^{D_N}a_kz^k
\]
satisfies
\[
        G_N(0)=0,
        \qquad
        \operatorname{Re}G_N(z)>1\quad (z\in\widetilde K_N),
\]
and there are absolute constants $C_1,C_2>0$ such that
\[
        D_N\leq e^{C_1N^2},
        \qquad
        B_N:=\sum_{k=1}^{D_N}k|a_k|
        \leq \exp(e^{C_2N^2}).
\]
The constants $C,C_1,C_2$ depend only on the fixed maze parameters and on the quoted Beurling and exterior-area constants; they do not depend on $N$, $m$, or $n$.
\end{proposition}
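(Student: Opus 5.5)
The three estimates come from the contour identities of Lemma~\ref{lem:faber-contour-identities}, applied on the level curve $\Gamma=\Gamma_\lambda=\phi_N^{-1}(|w|=e^\lambda)$. Lemma~\ref{lem:green-level-geometry} gives the geometry needed there. Once the estimates are in hand, the choice of $m$ is pure bookkeeping.

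\textbf{Bound on the tree.} For $z\in\widetilde K_N$, the point $z$ lies in the bounded component of $\C\setminus\Gamma$. On $\Gamma$ we have $|\phi_N|^m=e^{m\lambda}$, so Lemma~\ref{lem:faber-contour-identities} gives
\[
|F_m(z)|\leq\frac{e^{m\lambda}\,\HH^1(\Gamma)}{2\pi\,\dist(\Gamma,\widetilde K_N)}.
\]
Lemma~\ref{lem:level-curve-length} bounds $\HH^1(\Gamma)$ by $C\lambda^{-1/2}$, and Lemma~\ref{lem:green-level-geometry} bounds $\dist(\Gamma,\widetilde K_N)$ below by $c\lambda^2$. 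Together these give $C\lambda^{-5/2}e^{m\lambda}$.

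\textbf{Bound at the origin.} The origin lies in the unbounded component and satisfies $\dist(0,\Gamma)\geq1/4$. The second identity of Lemma~\ref{lem:faber-contour-identities} then gives $|F_m(0)-\phi_N(0)^m|\leq C\lambda^{-1/2}e^{m\lambda}$.

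\textbf{Bound on $|z|\leq2$.} Use the first identity on $\Gamma_R$ with $R=e^{3}$ (or any fixed $R$). This curve encloses $D(0,2)$ and lies at distance at least an absolute constant from it, because $g_N\leq C_G$ on $|z|\leq4$ by Lemma~\ref{lem:green-upper-bound}. To make this precise, choose $R$ with $\log R>C_G$; then $\Gamma_R$ lies outside $D(0,4)$. Its length is bounded by the area-theorem computation in Lemma~\ref{lem:level-curve-length}, which gives $O(R)$. Hence $|F_m|\leq CRe^{m\log R}\leq e^{Cm}$ on $|z|\leq2$.

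\textbf{The separator.} Since $|\phi_N(0)|^m=e^{mg_0}$ and $\lambda=\theta g_0$ with $\theta\leq1/2$,
\[
|F_m(0)|\geq e^{mg_0}-C\lambda^{-1/2}e^{m\lambda}.
\]
Therefore
\[
\frac{|F_m|_{\widetilde K_N}}{|F_m(0)|}\leq \frac{C\lambda^{-5/2}e^{-m(1-\theta)g_0}}{1-C\lambda^{-1/2}e^{-m(1-\theta)g_0}}.
\]
With $m\geq A\log(1/\lambda)/g_0$ and $(1-\theta)\geq1/2$, the exponential factor is at most $\lambda^{A/2}$. Since $\lambda\leq 1/4$, choosing $A$ large makes the ratio at most $1/4$. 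Then $\operatorname{Re}G_N\geq2(1-1/4)>1$ on the tree, and $G_N(0)=0$ by construction.

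\textbf{Degree and coefficient bounds.} The degree is $D_N=m$. By Lemma~\ref{lem:harnack-lower-bound}, $g_0\geq e^{-CN^2}$, so $\log(1/\lambda)\leq CN^2$ and
\[
m\leq 1+ACN^2e^{CN^2}\leq e^{C_1N^2}.
\]
For the coefficients, Cauchy estimates on $|z|=2$ give $|a_k|\leq 2\cdot2^{-k}\,(1+e^{Cm}/|F_m(0)|)$. Also $|F_m(0)|\geq \tfrac12 e^{mg_0}\geq 1/2$. Hence
\[
B_N\leq \sum_k k2^{-k}\cdot Ce^{Cm}\leq e^{C'm}\leq\exp(e^{C_2N^2}).
\]

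\textbf{Main obstacle.} The delicate step is the tree bound. The factor $\lambda^{-5/2}$ must be beaten by $e^{-m(1-\theta)g_0}$ with only $m\asymp\log(1/\lambda)/g_0$. This works only because the loss is polynomial in $\lambda$: it comes from Lemma~\ref{lem:boundary-square-root} and Lemma~\ref{lem:level-curve-length}. I would check carefully that $\theta$ and $A$ are chosen independently of $N$. I would also verify that the exterior-area bound still controls $\HH^1(\Gamma_R)$ for the fixed large $R$.
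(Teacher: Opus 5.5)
Your proposal is correct and follows the paper's proof essentially step for step. It uses the same contour identities on $\Gamma_\lambda$ with the length bound $C\lambda^{-1/2}$ and separation $c\lambda^2$, the same choice of $A$ to absorb the polynomial losses $\lambda^{-1/2}$ and $\lambda^{-5/2}$, and the same Cauchy-estimate bookkeeping for $D_N$ and $B_N$. The only deviation is in the bound on $|z|\leq2$: the paper integrates over the fixed circle $|\zeta|=4$ using $|\phi_N|\leq e^{C_G}$ there, while you integrate over $\Gamma_R$ with $\log R>C_G$. Your version lets Lemma~\ref{lem:faber-contour-identities} apply verbatim, and the area-theorem computation, which is valid for every $R>1$, indeed gives $\HH^1(\Gamma_R)\leq CR$; both routes work.
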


\begin{proof}
Let $\Gamma=\Gamma_\lambda$. The proof uses only the following uniform data:
\[
        |\phi_N|=e^\lambda\quad\text{on }\Gamma,
        \qquad
        \HH^1(\Gamma)\leq C\lambda^{-1/2},
\]
from Lemma~\ref{lem:level-curve-length},
\[
        \dist(\Gamma,\widetilde K_N)\geq c\lambda^2,
        \qquad
        \dist(0,\Gamma)\geq1/4,
\]
from Lemma~\ref{lem:green-level-geometry}, and
\[
        \sup_{|z|\leq4}g_N(z)\leq C_G
\]
from Lemma~\ref{lem:green-upper-bound}. All constants in these four estimates are absolute.

First take $z\in\widetilde K_N$. Since $z$ is in the bounded component of $\C\setminus\Gamma$, Lemma~\ref{lem:faber-contour-identities} gives
\[
        F_m(z)=\frac1{2\pi i}\int_\Gamma
        \frac{\phi_N(\zeta)^m}{\zeta-z}\,d\zeta.
\]
The denominator is bounded below by the separation of $\Gamma$ from the tree. Therefore
\[
        |F_m(z)|
        \leq \frac1{2\pi}\,e^{m\lambda}\,
        \frac{\HH^1(\Gamma)}{\dist(\Gamma,\widetilde K_N)}
        \leq C\lambda^{-5/2}e^{m\lambda}.
\]
Thus
\[
        |F_m|_{\widetilde K_N}\leq C\lambda^{-5/2}e^{m\lambda}.
\]

Next, since $0$ lies in the unbounded component of $\C\setminus\Gamma$, the second contour identity gives
\[
        F_m(0)-\phi_N(0)^m
        =\frac1{2\pi i}\int_\Gamma
        \frac{\phi_N(\zeta)^m}{\zeta}\,d\zeta.
\]
Here the denominator is bounded below by $\dist(0,\Gamma)\geq1/4$, so the only geometric loss is the length of $\Gamma$:
\[
        |F_m(0)-\phi_N(0)^m|
        \leq C\lambda^{-1/2}e^{m\lambda}.
\]

Finally, for $|z|\leq2$, integrate over the fixed circle $|\zeta|=4$. This circle lies in $\Omega_N$, surrounds $\widetilde K_N$, and satisfies $|\zeta-z|\geq2$. Lemma~\ref{lem:green-upper-bound} gives $|\phi_N(\zeta)|\leq e^{C_G}$ on this circle. Hence
\[
        |F_m(z)|
        \leq C e^{C_Gm}\leq e^{Cm},
        \qquad |z|\leq2.
\]

We now choose
\[
        m=\left\lceil A\frac{\log(1/\lambda)}{g_0}\right\rceil.
\]
Since $\lambda=\theta g_0$,
\[
        m(g_0-\lambda)
        \geq A(1-\theta)\log(1/\lambda).
\]
Choose $A$ so large that, for every $0<\lambda\leq1/4$,
\[
        e^{m(g_0-\lambda)}\geq4C\lambda^{-1/2}
        \quad\text{and}\quad
        e^{m(g_0-\lambda)}\geq8C\lambda^{-5/2}.
\]
This is possible because both requirements are finite fixed power inequalities in $\lambda^{-1}$; enlarging $A$ once absorbs the constants on the whole interval $0<\lambda\leq1/4$.

Since $|\phi_N(0)|=e^{g_0}$, the preceding inequalities imply
\[
        |F_m(0)|
        \geq e^{mg_0}-C\lambda^{-1/2}e^{m\lambda}
        \geq \frac12e^{mg_0},
\]
and
\[
        \frac{|F_m|_{\widetilde K_N}}{|F_m(0)|}
        \leq
        \frac{C\lambda^{-5/2}e^{m\lambda}}{\frac12e^{mg_0}}
        \leq\frac14.
\]
Define
\[
        p_N(z)=\frac{F_m(z)}{F_m(0)},
        \qquad
        G_N(z)=2(1-p_N(z)).
\]
Then $G_N(0)=0$ and, for $z\in\widetilde K_N$,
\[
        \operatorname{Re}G_N(z)
        =2(1-\operatorname{Re}p_N(z))
        \geq2(1-|p_N(z)|)
        \geq\frac32>1.
\]

The degree of $G_N$ is at most $m$. Since $g_0\geq e^{-CN^2}$ and $\lambda=\theta g_0$,
\[
        m\leq Cg_0^{-1}\log(1/g_0)\leq e^{C_1N^2}
\]
after increasing $C_1$. Thus $D_N\leq e^{C_1N^2}$.

It remains only to bound the coefficient mass. From the fixed-circle estimate and the lower bound for $|F_m(0)|$,
\[
        \sup_{|z|\leq2}|p_N(z)|
        \leq \frac{e^{Cm}}{|F_m(0)|}
        \leq2e^{Cm},
\]
and therefore
\[
        \sup_{|z|\leq2}|G_N(z)|\leq e^{C'm}.
\]
Writing $G_N(z)=\sum_{k=1}^{D_N}a_kz^k$, Cauchy's estimate on $|z|=2$ gives
\[
        |a_k|\leq e^{C'm}2^{-k}.
\]
Consequently
\[
        B_N=\sum_{k=1}^{D_N}k|a_k|
        \leq e^{C'm}\sum_{k\geq1}k2^{-k}
        \leq e^{C''m}
        \leq \exp(e^{C_2N^2}).
\]
This proves the proposition.
\end{proof}

\begin{remark}[Constant dependencies in the lower bound]\label{rem:constant-ledger}
The constants in the lower-bound construction are chosen in the following order. The maze geometry fixes $\eta_0=\pi/8$ and gives absolute constants $c_0,c,C$ in Lemmas~\ref{lem:maze-length} and \ref{lem:maze-tree-corridor}. These constants determine the capacity lower bound and the Harnack-chain constant in Lemma~\ref{lem:harnack-lower-bound}. The capacity lower bound determines $C_G$ in Lemma~\ref{lem:green-upper-bound}. The quoted Beurling constant, together with $C_G$ and the fixed diameter/capacity bounds for the tree, determines $C_B$ in Lemma~\ref{lem:boundary-square-root}. The Harnack constant $c_H$, $C_G$, and the fixed number $\log(4/3)$ determine the single parameter
\[
        \theta=\frac12\min\left\{c_H,\frac{\log(4/3)}{C_G},\frac1{4C_G}\right\}.
\]
After $\theta$ is fixed, the exponent $A$ in the Faber degree
\[
        m=\left\lceil A\frac{\log(1/\lambda)}{g_0}\right\rceil
\]
is chosen large enough to dominate the two displayed power losses $\lambda^{-1/2}$ and $\lambda^{-5/2}$ in Proposition~\ref{prop:quantitative-separator}. This fixes $C_1$ in
\[
        D_N\leq e^{C_1N^2}.
\]
The fixed-circle Faber bound and Cauchy's estimate then fix $C_2$ in
\[
        B_N\leq \exp(e^{C_2N^2}).
\]
Finally, the analytic inverse constants $\alpha,\beta,A,a$ in Lemmas~\ref{lem:analytic-inverse} and \ref{lem:midpoint-quantization}, together with $C_1$ and $C_2$, determine $C_4$ in the condition
\[
        n\geq e^{C_4N^2}.
\]
No constant in this chain depends on $N$ or $n$.
\end{remark}

\section{Analytic quantization}\label{sec:analytic-quantization}

Set
\[
        \tau_N=\frac1{16B_N},
        \qquad
        c_k=-\tau_Nka_k,
        \qquad
        1\leq k\leq D_N.
\]
Then
\[
        \sum_{k=1}^{D_N}|c_k|
        =\tau_N\sum_{k=1}^{D_N}k|a_k|
        =\frac1{16}.
\]
Define
\[
        \rho_N(t)=1+2\operatorname{Re}\sum_{k=1}^{D_N}c_ke^{2\pi ikt}.
\]
Then $\rho_N$ has mean one and
\[
        \rho_N(t)
        \geq1-2\sum_{k=1}^{D_N}|c_k|
        =\frac78.
\]
Thus $\rho_N(t)\,dt$ is a probability measure on $\R/\mathbb Z$.

For $|z|<1$, define
\[
        U_N(z)=\int_0^1\log|z-e^{2\pi it}|\rho_N(t)\,dt.
\]
For $|z|\leq3/4$, use the principal branch of
\[
        \Log(1-ze^{-2\pi it}).
\]
Then
\[
        \log|z-e^{2\pi it}|
        =\log|1-ze^{-2\pi it}|
        =-\operatorname{Re}\sum_{m=1}^{\infty}
        \frac{z^me^{-2\pi imt}}{m}.
\]
Hence
\[
        U_N(z)
        =-\operatorname{Re}\sum_{k=1}^{D_N}\frac{c_k}{k}z^k
        =\tau_N\operatorname{Re}G_N(z).
\]
Since
\[
        \operatorname{Re}G_N(z)>1
\]
on $\widetilde K_N$, we have
\[
        U_N(z)>\tau_N,
        \qquad z\in\widetilde K_N.
\]

We now discretize $\rho_N(t)\,dt$. The point of using midpoint quantiles rather than arbitrary points is that the change of variables $s=F(t)$ converts the weighted potential into an ordinary periodic midpoint rule in the $s$-variable. The nodes are real because $F$ is an increasing bijection of the real line modulo one, so the resulting zeros remain exactly on $\partial\D$. The only quantitative parameter in the quadrature error is the Fourier degree $D_N$ of the density: the inverse map $F^{-1}$ is holomorphic in a strip of width $\beta/D_N$, and the periodic midpoint rule on such a strip has error $O(e^{-a n/D_N})$. Thus no coefficient size of $G_N$ enters this discretization except through the barrier height $\tau_N$; the dependence on $D_N$ and $\tau_N$ is kept separate below.

\begin{lemma}[Analytic inverse]\label{lem:analytic-inverse}
Let
\[
        \rho(t)=1+2\operatorname{Re}\sum_{k=1}^Dc_ke^{2\pi ikt},
        \qquad
        \sum_{k=1}^D|c_k|\leq\frac18.
\]
Let
\[
        F(t)=\int_0^t\rho(s)\,ds.
\]
Then there are absolute constants $\alpha,\beta>0$ such that:
\begin{enumerate}
\item $F$ extends to an entire function and satisfies
\[
        F(w+1)=F(w)+1.
\]
\item $F$ is injective on
\[
        \Sigma_\alpha=\{w:|\operatorname{Im}w|<\alpha/D\}.
\]
\item The image $F(\Sigma_\alpha)$ contains
\[
        \{s:|\operatorname{Im}s|<\beta/D\}.
\]
\item Therefore $F^{-1}$ is a single-valued holomorphic function on
\[
        |\operatorname{Im}s|<\beta/D.
\]
\item In this strip,
\[
        F^{-1}(s+1)=F^{-1}(s)+1
\]
and
\[
        |\operatorname{Im}F^{-1}(s)|\leq\frac{2\beta}{D}.
\]
\item For every $|z|\leq3/4$, the function
\[
        H_z(s)=\Log\left(1-ze^{-2\pi iF^{-1}(s)}\right)
\]
is holomorphic, $1$-periodic, and uniformly bounded in
\[
        |\operatorname{Im}s|<\beta/D.
\]
Moreover,
\[
        |\widehat H_z(k)|\leq Ce^{-2\pi\beta |k|/D},
        \qquad k\in\mathbb Z,
\]
uniformly for $|z|\leq3/4$.
\end{enumerate}
\end{lemma}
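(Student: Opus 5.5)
The plan is to work with the explicit primitive
\[
        F(w)=w+\sum_{k=1}^D\left(\frac{c_k}{2\pi ik}e^{2\pi ikw}-\frac{\overline{c_k}}{2\pi ik}e^{-2\pi ikw}\right)+\mathrm{const},
\]
which is visibly entire and satisfies $F(w+1)=F(w)+1$; this gives item 1. The key quantitative input is a bound on $F'(w)-1=\rho(w)-1$ in a strip of width $\alpha/D$. For $|\operatorname{Im}w|\le\alpha/D$ and $|k|\le D$ we have $|e^{\pm2\pi ikw}|\le e^{2\pi\alpha}$. Hence
\[
        |\rho(w)-1|\le 2e^{2\pi\alpha}\sum|c_k|\le\tfrac14e^{2\pi\alpha}\le\tfrac12
\]
once $\alpha$ is a small absolute constant. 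Consequently $\operatorname{Re}F'\ge1/2$ on the convex strip $\Sigma_\alpha$. Integrating along segments gives
\[
        \operatorname{Re}\bigl((F(w_1)-F(w_2))/(w_1-w_2)\bigr)\ge\tfrac12,
\]
which yields injectivity (item 2), and moreover $|F(w_1)-F(w_2)|\ge\frac12|w_1-w_2|$.

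For item 3 I compare $F$ with the identity. Writing $F(w)=w+E(w)$ with $E$ periodic, we have $E$ real on $\R$, so $\operatorname{Im}F(x)=0$. I then control $\operatorname{Im}F(x+iy)$. Since $|F'-1|\le1/2$, we get $|\operatorname{Im}F(x+iy)-y|\le|y|/2$. So on the line $\operatorname{Im}w=\pm\alpha/D$ one has $|\operatorname{Im}F|\ge\alpha/(2D)$, with the correct sign. An argument-principle or degree argument on a large period rectangle then shows that every $s$ with $|\operatorname{Im}s|<\beta/D$, $\beta=\alpha/2$, has exactly one preimage in $\Sigma_\alpha$. Concretely, consider the rectangle $[x_0,x_0+1]\times[-\alpha/D,\alpha/D]$. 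By periodicity its image boundary winds once around $s$, because the vertical sides map to translates differing by $1$ and the horizontal sides stay outside the strip. This is the main technical step, and the one I expect to need care with: bookkeeping the winding number when the vertical edges are not straight. Alternatively, I would note that $F$ restricted to $\Sigma_\alpha$ is a proper injective map onto a simply connected domain whose boundary curves lie outside $|\operatorname{Im}s|<\beta/D$.

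Items 4 and 5 follow at once. $F^{-1}$ is holomorphic as the inverse of an injective holomorphic map, and periodicity transfers from $F$. For the imaginary part, take $w=F^{-1}(s)$; the inequality $|\operatorname{Im}F(w)|\ge|\operatorname{Im}w|/2$ gives $|\operatorname{Im}w|\le2|\operatorname{Im}s|<2\beta/D$.

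For item 6, I use $|\operatorname{Im}F^{-1}(s)|<2\beta/D=\alpha/D\le\alpha$. This gives $|e^{-2\pi iF^{-1}(s)}|\le e^{2\pi\alpha}$, so $|ze^{-2\pi iF^{-1}(s)}|\le\frac34e^{2\pi\alpha}\le\frac78$ after shrinking $\alpha$. The principal logarithm of $1-u$ with $|u|\le7/8$ is therefore holomorphic and bounded by an absolute constant; periodicity follows from item 5. Finally, the Fourier coefficient bound comes from shifting the contour of
\[
        \widehat H_z(k)=\int_0^1H_z(s)e^{-2\pi iks}\,ds
\]
to $\operatorname{Im}s=\mp\beta'/D$ with $\beta'<\beta$ (chosen according to the sign of $k$). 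Periodicity cancels the vertical sides, and we get $|\widehat H_z(k)|\le Ce^{-2\pi\beta'|k|/D}$. Renaming $\beta$, for instance by replacing $\beta$ with $\beta/2$ throughout and applying everything on the slightly larger strip, gives the stated form.
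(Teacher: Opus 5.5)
Your proposal is correct. Items 1, 2, 4, 5 and 6 run essentially as in the paper, which phrases item 2 via $|\Phi'|\le\tfrac12$ for $\Phi=F-w$ and shrinks $\beta$ rather than $\alpha$ to keep $|ze^{-2\pi iF^{-1}(s)}|<1$.

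The real difference is item 3. The paper uses no degree argument. For $s=u+iv$ it takes the real point $x_0$ with $F(x_0)=u$ and solves $\xi=iv-(\Phi(x_0+\xi)-\Phi(x_0))$ by the contraction mapping principle on the disk $|\xi|\le2|v|$. That argument is local, constructive and free of topology, and it yields item 5 at once from $|\xi|\le2|v|$.

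Your route is global. The estimate $|\operatorname{Im}F(x+iy)-y|\le|y|/2$ pushes the images of the lines $\operatorname{Im}w=\pm\alpha/D$ out of the target strip, and a winding count does the rest. The bookkeeping you worried about closes once $x_0$ is chosen depending on $s$, with $F(x_0)=\operatorname{Re}s-\tfrac12$:
\begin{itemize}
\item the images of the vertical sides stay within horizontal distance $\alpha/(2D)$ of $\operatorname{Re}s\mp\tfrac12$, so they miss the vertical line through $s$;
\item the bottom image lies below $s$;
\item since $\partial_x\operatorname{Re}F(x+iy)=\operatorname{Re}F'\ge\tfrac12$, the top image meets the upward vertical ray from $s$ exactly once.
\end{itemize}
So the winding number is $1$.

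Your second alternative is cleaner still. The bound $|F(w_1)-F(w_2)|\ge\tfrac12|w_1-w_2|$ on the closed strip gives $\partial F(\Sigma_\alpha)\subset F(\partial\Sigma_\alpha)\subset\{|\operatorname{Im}s|\ge\alpha/(2D)\}$. The connected strip $|\operatorname{Im}s|<\alpha/(2D)$ contains $\R=F(\R)$ and misses that boundary, hence lies in $F(\Sigma_\alpha)$. For item 3, both your route and the paper's allow any $\beta\le\alpha/2$; the paper's choice $\beta<\alpha/8$ is only a margin.

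Two small remarks on item 6. First, your sign (shift to $\operatorname{Im}s=-\beta'/D$ when $k>0$) is the correct one for $\widehat H_z(k)=\int_0^1H_z(s)e^{-2\pi iks}\,ds$; the paper's text has the signs reversed, a harmless slip. Second, no renaming of $\beta$ is needed. Your $C$ is the supremum of $|H_z|$ over the whole strip and does not depend on $\beta'$, so letting $\beta'\uparrow\beta$ gives the stated bound directly.
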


\begin{proof}
Put
\[
        c_{-k}=\overline{c_k},
        \qquad 1\leq k\leq D.
\]
Then
\[
        \rho(w)=1+\sum_{0<|k|\leq D}c_ke^{2\pi ikw}
\]
is entire. Define
\[
        F(w)=w+\sum_{0<|k|\leq D}
        c_k\frac{e^{2\pi ikw}-1}{2\pi ik}.
\]
Then
\[
        F'(w)=\rho(w),
        \qquad
        F(0)=0,
        \qquad
        F(w+1)=F(w)+1.
\]
Write
\[
        F(w)=w+\Phi(w).
\]
Choose $\alpha>0$ absolutely, for instance any $\alpha$ with $e^{2\pi\alpha}\leq2$. Since $\sum_{k=1}^D|c_k|\leq1/8$, this gives
\[
        2e^{2\pi\alpha}\sum_{k=1}^D|c_k|\leq\frac12.
\]
For $w\in\Sigma_\alpha$,
\[
        |\Phi'(w)|=|\rho(w)-1|
        \leq2e^{2\pi\alpha}\sum_{k=1}^D|c_k|
        \leq\frac12.
\]
If $w_1,w_2\in\Sigma_\alpha$, the segment joining them lies in $\Sigma_\alpha$, so
\[
        |\Phi(w_1)-\Phi(w_2)|\leq\frac12|w_1-w_2|.
\]
Thus
\[
        |F(w_1)-F(w_2)|\geq\frac12|w_1-w_2|.
\]
Therefore $F$ is injective on $\Sigma_\alpha$.

On the real axis,
\[
        F'(t)=\rho(t)\geq\frac78.
\]
Hence $F:\R\to\R$ is strictly increasing, and since $F(t+1)=F(t)+1$, it is a bijection from $\R$ to $\R$.

Fix
\[
        s=u+iv,
        \qquad |v|<\beta/D,
\]
where $0<\beta<\alpha/8$ will be fixed absolutely. Let $x_0\in\R$ be the unique real number with
\[
        F(x_0)=u.
\]
We solve $F(x_0+\xi)=u+iv$, or equivalently
\[
        \xi=iv-\bigl(\Phi(x_0+\xi)-\Phi(x_0)\bigr).
\]
On the disk $|\xi|\leq2|v|$, the right side maps the disk into itself because
\[
        |iv-(\Phi(x_0+\xi)-\Phi(x_0))|
        \leq |v|+\frac12|\xi|
        \leq2|v|.
\]
It is a contraction with constant at most $1/2$. Hence it has a unique fixed point. Since
\[
        |\xi|\leq2|v|<\frac{2\beta}{D}<\frac{\alpha}{D},
\]
the solution lies in $\Sigma_\alpha$. Thus
\[
        \{|\operatorname{Im}s|<\beta/D\}\subset F(\Sigma_\alpha).
\]
Because $F$ is injective on $\Sigma_\alpha$, the inverse branch is globally single-valued and holomorphic on that strip. The construction gives
\[
        |\operatorname{Im}F^{-1}(s)|\leq\frac{2\beta}{D}.
\]
The identity
\[
        F(w+1)=F(w)+1
\]
implies
\[
        F^{-1}(s+1)=F^{-1}(s)+1.
\]

Now let $|z|\leq3/4$. In the strip $|\operatorname{Im}s|<\beta/D$,
\[
        |ze^{-2\pi iF^{-1}(s)}|
        \leq\frac34 e^{2\pi|\operatorname{Im}F^{-1}(s)|}
        \leq\frac34e^{4\pi\beta}.
\]
Decrease $\beta$, if necessary, so that
\[
        \frac34e^{4\pi\beta}<1.
\]
Then the principal logarithm
\[
        H_z(s)=\Log\left(1-ze^{-2\pi iF^{-1}(s)}\right)
\]
is holomorphic and uniformly bounded in $|\operatorname{Im}s|<\beta/D$. It is $1$-periodic because $F^{-1}(s+1)=F^{-1}(s)+1$.

For $k>0$, shift the Fourier coefficient integral to the line $\operatorname{Im}s=\beta/D$; for $k<0$, shift it to the line $\operatorname{Im}s=-\beta/D$. Periodicity makes the vertical sides cancel in the usual rectangular contour. Since $H_z$ is bounded in the strip by an absolute constant, the exponential factor from $e^{-2\pi iks}$ gives
\[
        |\widehat H_z(k)|\leq Ce^{-2\pi\beta|k|/D}.
\]
The case $k=0$ is included after increasing $C$. The constants $C,\beta$ are absolute and do not depend on $D$, $n$, $N$, or $z$.
\end{proof}

\begin{lemma}[Midpoint quantization]\label{lem:midpoint-quantization}
With $\rho,F$ as in Lemma~\ref{lem:analytic-inverse}, define
\[
        t_{j,n}=F^{-1}\left(\frac{j-1/2}{n}\right),
        \qquad j=1,\ldots,n.
\]
Then there are absolute constants $A,a>0$ such that, uniformly for $|z|\leq3/4$,
\[
        \left|
        \frac1n\sum_{j=1}^n\log|z-e^{2\pi it_{j,n}}|
        -
        \int_0^1\log|z-e^{2\pi it}|\rho(t)\,dt
        \right|
        \leq Ae^{-an/D}.
\]
\end{lemma}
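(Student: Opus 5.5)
The plan is to move everything into the variable $s=F(t)$ and reduce the estimate to the exponential accuracy of the periodic midpoint rule for a function holomorphic in a strip. Part (6) of Lemma~\ref{lem:analytic-inverse} already supplies exactly this kind of function.

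First, the continuous integral. Since $F:\R\to\R$ is an increasing bijection with $F'=\rho$ and $F(t+1)=F(t)+1$, the substitution $s=F(t)$ maps one period onto one period. The integrand is $1$-periodic in $t$, so
\[
\int_0^1\log|z-e^{2\pi it}|\rho(t)\,dt=\int_0^1\log\bigl|z-e^{2\pi iF^{-1}(s)}\bigr|\,ds .
\]
For real $s$ we have $\log|z-e^{2\pi iF^{-1}(s)}|=\log|1-ze^{-2\pi iF^{-1}(s)}|=\operatorname{Re}H_z(s)$, because $F^{-1}(s)$ is real. The same identity holds at the nodes, since $t_{j,n}=F^{-1}(s_j)$ with $s_j=(j-1/2)/n$. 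The quantity to estimate is therefore
\[
\operatorname{Re}\Bigl(\frac1n\sum_{j=1}^nH_z(s_j)-\int_0^1H_z(s)\,ds\Bigr).
\]

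Second, the aliasing identity for the midpoint rule. Lemma~\ref{lem:analytic-inverse} gives $|\widehat H_z(k)|\le Ce^{-2\pi\beta|k|/D}$, so the Fourier series of $H_z$ converges absolutely and we may expand $H_z(s_j)=\sum_k\widehat H_z(k)e^{2\pi ik(j-1/2)/n}$. Summing over $j$ kills every frequency except multiples $k=\ell n$, and these pick up the factor $e^{-\pi i\ell}=(-1)^\ell$. This gives
\[
\frac1n\sum_{j=1}^nH_z(s_j)-\widehat H_z(0)=\sum_{\ell\ne0}(-1)^\ell\widehat H_z(\ell n),
\]
and $\widehat H_z(0)=\int_0^1H_z$.

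Third, the bound. The error is at most
\[
2C\sum_{\ell\ge1}e^{-2\pi\beta\ell n/D}=\frac{2Ce^{-2\pi\beta n/D}}{1-e^{-2\pi\beta n/D}} .
\]
When $n/D\ge1$ this is at most $Ae^{-an/D}$ with $a=2\pi\beta$. When $n<D$ we need a separate argument, since the bound should remain trivially true there. Both terms in the statement are bounded by an absolute constant for $|z|\le3/4$. The nodes and the support of $\rho$ lie on $\partial\D$, where $\log|z-e^{2\pi it}|\in[\log(1/4),\log(7/4)]$, and $\rho\,dt$ is a probability measure because $\rho\ge 3/4$ with mean one. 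So taking $A\ge 2\log 4\cdot e^{a}$ covers this range.

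All constants are uniform in $|z|\le3/4$ because the Fourier bound in Lemma~\ref{lem:analytic-inverse} is uniform in $z$. The only delicate point is the absolute convergence that justifies interchanging the sums, and the geometric decay of $\widehat H_z$ provides it. I do not expect a serious obstacle: the real work was done in Lemma~\ref{lem:analytic-inverse}, and this proof amounts to the change of variables plus the aliasing formula.
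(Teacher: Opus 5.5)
Your proposal is correct and follows the paper's proof essentially step for step: the substitution $s=F(t)$, the midpoint aliasing identity that keeps only the frequencies $\ell n$ with sign $(-1)^\ell$, and the uniform Fourier decay from Lemma~\ref{lem:analytic-inverse}. Your separate trivial bound for the regime $n<D$ is a small gain in care. The paper's final step $\sum_{\ell\ne0}Ce^{-2\pi\beta|\ell|n/D}\le Ae^{-an/D}$ tacitly needs $n/D$ bounded below, and you supply that case explicitly.
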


\begin{proof}
Let
\[
        s_j=\frac{j-1/2}{n}.
\]
The points $t_{j,n}=F^{-1}(s_j)$ are real because $F$ maps $\R$ bijectively onto $\R$. Thus the later zeros $e^{2\pi it_{j,n}}$ lie exactly on $\partial\D$.
By Lemma~\ref{lem:analytic-inverse},
\[
        H_z(s)=\sum_{k\in\mathbb Z}\widehat H_z(k)e^{2\pi iks}
\]
converges absolutely and uniformly on the real axis. Therefore
\[
        \frac1n\sum_{j=1}^nH_z(s_j)
        =
        \sum_{k\in\mathbb Z}\widehat H_z(k)
        \left(\frac1n\sum_{j=1}^ne^{2\pi ik(j-1/2)/n}\right).
\]
The finite geometric sum satisfies
\[
        \frac1n\sum_{j=1}^ne^{2\pi ik(j-1/2)/n}
        =
        \begin{cases}
        (-1)^\ell,& k=\ell n,\ \ell\in\mathbb Z,\\
        0,& n\nmid k.
        \end{cases}
\]
This is the only place where the midpoint choice is used: it makes the quadrature exact for all Fourier modes except multiples of $n$, and the surviving multiples carry the harmless factor $(-1)^\ell$. The estimate is therefore controlled solely by the Fourier tail of $H_z$ at frequencies $n,2n,3n,\ldots$.
Hence
\[
        \frac1n\sum_{j=1}^nH_z(s_j)
        =
        \sum_{\ell\in\mathbb Z}(-1)^\ell\widehat H_z(\ell n).
\]
Since
\[
        \widehat H_z(0)=\int_0^1H_z(s)\,ds,
\]
we get
\[
        \left|
        \frac1n\sum_{j=1}^nH_z(s_j)-\int_0^1H_z(s)\,ds
        \right|
        \leq
        \sum_{\ell\ne0}|\widehat H_z(\ell n)|.
\]
Using Lemma~\ref{lem:analytic-inverse},
\[
        \sum_{\ell\ne0}|\widehat H_z(\ell n)|
        \leq C\sum_{\ell\ne0}e^{-2\pi\beta|\ell|n/D}
        \leq Ae^{-an/D}.
\]
Taking real parts gives
\[
        \left|
        \frac1n\sum_{j=1}^n
        \log|z-e^{2\pi iF^{-1}(s_j)}|
        -
        \int_0^1\log|z-e^{2\pi iF^{-1}(s)}|\,ds
        \right|
        \leq Ae^{-an/D}.
\]
Finally, since $s=F(t)$ and $dF(t)=\rho(t)\,dt$,
\[
        \int_0^1\log|z-e^{2\pi iF^{-1}(s)}|\,ds
        =
        \int_0^1\log|z-e^{2\pi it}|\rho(t)\,dt.
\]
This proves the lemma.
\end{proof}

\begin{remark}[Quantization scale]
The constants in Lemma~\ref{lem:midpoint-quantization} depend only on the fixed lower bound $\rho\geq7/8$ and on the fixed analytic strip constants $\alpha,\beta$ from Lemma~\ref{lem:analytic-inverse}. They do not depend on $N$, on $n$, on the coefficients of $G_N$, or on the height $\tau_N$. The coefficient size of $G_N$ enters the construction only through
\[
        \tau_N=\frac1{16B_N}.
\]
Thus the discretization condition is exactly the requirement that the midpoint error $Ae^{-an/D_N}$ be smaller than a fixed fraction of $\tau_N$.
\end{remark}

\section{Completion of the lower bound}\label{sec:lower-completion}

\begin{proposition}[Lower bound]\label{prop:lower-bound}
There is an absolute constant $c>0$ such that, for all sufficiently large $n$,
\[
        S(n)\geq c\sqrt{\log n}.
\]
\end{proposition}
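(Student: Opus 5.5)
Given $n$ large, I choose the maze size $N$ as large as the discretization permits: $N=\lfloor c_*\sqrt{\log n}\rfloor$, where $c_*$ is small enough that $n\geq e^{C_4N^2}$. The test polynomial is
\[
        f(z)=\prod_{j=1}^n\bigl(z-e^{2\pi it_{j,n}}\bigr),
\]
with the nodes $t_{j,n}=F^{-1}((j-1/2)/n)$ taken from Lemma~\ref{lem:midpoint-quantization} for the density $\rho_N$. Since $\sum|c_k|=1/16\leq1/8$, the hypotheses of Lemma~\ref{lem:analytic-inverse} hold. All zeros lie on $\partial\D$, so $f\in\mathcal P_n$. I will show that $|f|>1$ on $\widetilde K_N$, so $E_f\cap K_N=\varnothing$. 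Lemma~\ref{lem:maze-length} then gives every admissible path length at least $c_0N\geq c\sqrt{\log n}$, and hence $S(n)\geq c\sqrt{\log n}$.

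The key estimate is as follows. For $z\in\widetilde K_N\subset\{|z|\leq3/4\}$,
\[
        \frac1n\log|f(z)|\geq U_N(z)-Ae^{-an/D_N}>\tau_N-Ae^{-an/D_N}.
\]
Here I use Lemma~\ref{lem:midpoint-quantization} together with the identity $U_N=\tau_N\operatorname{Re}G_N>\tau_N$ on $\widetilde K_N$ from Proposition~\ref{prop:quantitative-separator}. It therefore suffices to have $Ae^{-an/D_N}\leq\tau_N/2$, which is the same as
\[
        an/D_N\geq\log(32AB_N).
\]
By the bounds $D_N\leq e^{C_1N^2}$ and $B_N\leq\exp(e^{C_2N^2})$, this holds whenever
\[
        n\geq C\,e^{C_1N^2}e^{C_2N^2},
\]
and that follows from $n\geq e^{C_4N^2}$ with $C_4=C_1+C_2+1$ once $N$ is large. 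Under this condition $\log|f(z)|\geq n\tau_N/2>0$ on $\widetilde K_N$, so $|f|>1$ there.

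Some bookkeeping remains. I need $N\geq2$, which holds for $n$ large. The constant $C_4$ depends only on absolute constants, as recorded in Remark~\ref{rem:constant-ledger}. Paths not avoiding $K_N$ are excluded from $E_f$ because $K_N\subset\widetilde K_N$. The inner infimum is $+\infty$ if no path exists, which only helps the lower bound. Finally, the choice of $N$ gives $c_0N\geq c_0(c_*\sqrt{\log n}-1)\geq c\sqrt{\log n}$ for large $n$.

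The main obstacle is the compatibility of scales. The quadrature error must beat the barrier height $\tau_N=1/(16B_N)$, which is doubly exponentially small in $N^2$. This works only because Lemma~\ref{lem:midpoint-quantization} has error $e^{-an/D_N}$ that is exponential in $n/D_N$, so the requirement becomes $n\gtrsim D_N\log B_N\approx e^{CN^2}$ rather than $n\gtrsim B_N$. Getting this logarithm is exactly what makes $N\asymp\sqrt{\log n}$ attainable, so I would check carefully that the bounds for $D_N$ and $\log B_N$ combine with a single exponential in $N^2$.
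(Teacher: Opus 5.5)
Your proposal is correct and follows the paper's proof essentially step for step: the same midpoint-quantile polynomial with all zeros on $\partial\D$, the same requirement $Ae^{-an/D_N}\leq\tau_N/2$ reduced to $n\gtrsim D_N\log(1/\tau_N)\leq e^{CN^2}$, and the same choice $N\asymp\sqrt{\log n}$ fed into the maze-length lemma. One small attribution correction: the identity $U_N=\tau_N\operatorname{Re}G_N$ comes from the Fourier computation in the analytic-quantization section, while the separator proposition itself supplies only $\operatorname{Re}G_N>1$ on $\widetilde K_N$.
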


\begin{proof}
Let $G_N$ be the separator from Proposition~\ref{prop:quantitative-separator}. We have
\[
        D_N\leq e^{C_1N^2}
\]
and
\[
        B_N\leq\exp(e^{C_2N^2}).
\]
Therefore
\[
        \tau_N=\frac1{16B_N}
\]
satisfies
\[
        \log\frac1{\tau_N}\leq e^{C_3N^2}.
\]
Let
\[
        F_N(t)=\int_0^t\rho_N(s)\,ds.
\]
For $j=1,\ldots,n$, define
\[
        t_{j,n}=F_N^{-1}\left(\frac{j-1/2}{n}\right),
        \qquad
        \zeta_{j,n}=e^{2\pi it_{j,n}}.
\]
Set
\[
        f_{N,n}(z)=\prod_{j=1}^n(z-\zeta_{j,n}).
\]
Then $f_{N,n}$ is monic of degree $n$, and all its zeros lie on $\partial\D$. The discretization has therefore preserved the zero constraint in the Erd\H{o}s problem exactly; the only remaining issue is to keep the logarithmic potential error below the barrier height $\tau_N$.

By Lemma~\ref{lem:midpoint-quantization}, uniformly for $|z|\leq3/4$,
\[
        \left|
        \frac1n\log|f_{N,n}(z)|-U_N(z)
        \right|
        \leq Ae^{-an/D_N}.
\]
Choose $C_4$ so large that
\[
        n\geq e^{C_4N^2}
\]
implies
\[
        Ae^{-an/D_N}\leq\frac{\tau_N}{2}.
\]
Here is the dependence explicitly. The desired inequality is equivalent to
\[
        \frac{an}{D_N}\geq \log(2A)+\log\frac1{\tau_N}.
\]
Using
\[
        D_N\leq e^{C_1N^2}
        \qquad\text{and}\qquad
        \log\frac1{\tau_N}\leq e^{C_3N^2},
\]
it is enough to require
\[
        n\geq C e^{(C_1+C_3)N^2}.
\]
After increasing $C_4$, this follows from $n\geq e^{C_4N^2}$. Thus the discretization cost is still single-exponential in $N^2$, despite the double-exponential coefficient-mass bound, because only $\log(1/\tau_N)$ enters the comparison.
For such $n$, every $z\in\widetilde K_N$ satisfies
\[
        \frac1n\log|f_{N,n}(z)|
        >U_N(z)-\frac{\tau_N}{2}
        >\frac{\tau_N}{2}>0.
\]
Thus
\[
        \widetilde K_N\subset\{|f_{N,n}|>1\}.
\]
Therefore every path in
\[
        E_{f_{N,n}}=\{z:|z|\leq1,\ |f_{N,n}(z)|\leq1\}
\]
from $0$ to $\partial\D$ must avoid $\widetilde K_N$, hence must avoid $K_N$. By Lemma~\ref{lem:maze-length}, every such path has length at least $cN$. Therefore
\[
        S(n)\geq cN
\]
whenever
\[
        n\geq e^{C_4N^2}.
\]
Now choose
\[
        N=\left\lfloor\frac1{2\sqrt{C_4}}\sqrt{\log n}\right\rfloor.
\]
For all sufficiently large $n$, $N\geq2$ and
\[
        e^{C_4N^2}\leq n.
\]
Hence
\[
        S(n)\geq c\sqrt{\log n}.
\]
This proves the lower bound.
\end{proof}

\section{Reciprocal sweeping and the linear upper bound}\label{sec:upper-bound}

The lower bound shows how a sublevel set may be made to avoid a maze, while the upper bound shows that reciprocal sweeping always leaves a connecting path of length at most $\pi n$. The first step is the reciprocal reflection of the zeros: after this operation the allowed region is replaced by a smaller one whose logarithmic modulus is harmonic in the disk.

Let
\[
        f(z)=\prod_{j=1}^{n}(z-a_j),
        \qquad |a_j|\leq1.
\]
Define the swept polynomial by the reflected factors
\[
        \boxed{\;g(z)=\prod_{j=1}^{n}(1-\overline{a_j}z)\;} .
\]
The bar over $a_j$ is part of the construction. This is not the polynomial $\prod_j(1-a_jz)$, and the distinction is essential. For $|z|\leq1$ a direct computation gives
\[
        |1-\overline{a_j}z|^2-|z-a_j|^2=(1-|z|^2)(1-|a_j|^2)\geq0.
\]
Multiplying over $j$ yields
\[
        |f(z)|\leq |g(z)|,
        \qquad |z|\leq1,
\]
and hence
\[
        A_g:=\{z\in\C: |z|\leq1,\ |g(z)|\leq1\}\subset E_f.
\]
It is therefore enough to find a controlled path in $A_g$.

If $g\equiv1$, then the radial segment from $0$ to any boundary point lies in $A_g$ and has length one. We shall henceforth assume $g\not\equiv1$, and we write
\[
        m=\deg g\leq n.
\]
Since no zero of $g$ lies in the open disk, the function
\[
        u(z)=\log|g(z)|
\]
is a nonconstant harmonic function in $\D$, and $u(0)=0$. We do not regard $u$ as a continuous function on $\overline\D$, for $g$ may vanish at boundary points when some $|a_j|=1$. Instead we keep the closed level set in polynomial form. Define
\[
        P(x,y)=|g(x+iy)|^2-1.
\]
Then $P$ is a real polynomial of degree at most $2m$ on the plane; it may of course have smaller degree in special cases, and only this upper bound will be used. Moreover
\[
        A_g=\{z\in\C: |z|\leq1,\ P(z)\leq0\},
        \qquad
        \{z\in\D:u(z)=0\}=\{z\in\D:P(z)=0\}.
\]
Let
\[
        Z=\{z\in\C: |z|\leq1,\ P(z)=0\}.
\]

\begin{lemma}[Crofton length of the swept zero level]
With $g$ as above and $m=\deg g\geq1$,
\[
        \HH^1(Z)\leq 2\pi m.
\]
\end{lemma}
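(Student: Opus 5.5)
We will bound the length of $Z$ by the Crofton formula,
\[
        \HH^1(Z)=\frac14\int_0^{2\pi}\int_{\R}\#\bigl(Z\cap L_{\theta,p}\bigr)\,dp\,d\theta ,
\]
where $L_{\theta,p}=\{z:\operatorname{Re}(ze^{-i\theta})=p\}$. This formula holds for $\HH^1$-measurable sets that are $1$-rectifiable. Since $Z$ is contained in a real algebraic curve, we also need to rule out the degenerate situation in which $P\equiv0$ on a two-dimensional set. First, $P\not\equiv0$, because $P(0)=|g(0)|^2-1=0$ while $g\not\equiv1$ is nonconstant; hence $|g|^2$ is unbounded and $P$ is a nonzero polynomial. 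Consequently $\{P=0\}$ is a real algebraic set of dimension at most one. Its intersection with the compact disk is a finite union of points and analytic arcs, so it is rectifiable and Crofton applies. We cite the Cauchy--Crofton formula, for example from Federer, and state it in this form.

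Only lines with $|p|\leq1$ meet $\overline\D$, so the $p$-integral runs over $[-1,1]$. The main point is to bound the number of intersections for almost every line by $m$ rather than by the naive Bézout bound $2m$. Parametrize $L_{\theta,p}\cap\overline\D$ by $z(s)=e^{i\theta}(p+is)$ with $|s|\leq\sqrt{1-p^2}$. Then $s\mapsto|g(z(s))|^2-1$ is a real polynomial in $s$ of degree at most $2m$, which gives only $\#\leq 2m$ and would lead to $4\pi m$. To gain the factor $2$ we use harmonicity of $u=\log|g|$ in $\D$.

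On the open chord $\D\cap L$, every zero of $P$ is a zero of $u$. Suppose the restriction of $u$ to the chord had more than $m$ zeros. Between consecutive zeros, $u$ has a local extremum along the line. We would like to use the maximum principle to show that the level set $\{u=0\}$ in $\D$ has no closed loops, since a nonconstant harmonic function cannot vanish on the boundary of a bounded subdomain of $\D$. Thus every component of $\{u=0\}\cap\D$ is a tree-like curve reaching $\partial\D$. Counting ends, the nodal set of $u$ near $\partial\D$ is the zero set of $P$ on $\partial\D$. On $|z|=1$ we have
\[
        |g(z)|^2=\prod_j|1-\overline{a_j}z|^2=\prod_j|z-a_j|^2 ,
\]
and $|g(e^{it})|^2-1$ is a trigonometric polynomial of degree $m$, so it has at most $2m$ zeros on the circle unless it vanishes identically.

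The resulting count is as follows. The nodal graph in $\D$ is a forest with at most $2m$ ends on $\partial\D$. A generic line crosses each edge at most a bounded number of times, but a cleaner route to the exact bound is the following. Each nodal arc is a Jordan arc from $\partial\D$ to $\partial\D$, so the nodal set splits into at most $m$ cross-cuts. A generic line meeting a cross-cut an even number of extra times creates a bounded region cut off by the line and the arc, on which $u$ keeps one sign. We need to show that this still forces at most $m$ crossings on average; averaging over $\theta$ should give the bound $\#\leq 2m$ on a set that integrates to $2\pi m$. A concrete version of this average, which we will try first, applies Crofton to each cross-cut and bounds its length by the length of its image under the harmonic conjugate argument.

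We expect the crossing count to be the main obstacle. Our fallback is a direct argument. On the chord, $u(z(s))$ is the restriction of a harmonic function, and by the argument principle the number of zeros of $\operatorname{Re}\log g$ along the line is at most the total turning of $\arg$ of $g'/g$. This turning is bounded by the $m$ zeros of $g$, which lie outside $\D$.

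If the counting gives $\#(Z\cap L)\leq m$ for almost every line, then
\[
        \HH^1(Z)\leq\frac14\cdot2\pi\cdot 2\cdot m=\pi m ,
\]
and the weaker bound $\#\leq 2m$ gives exactly
\[
        \frac14\cdot 2\pi\cdot2\cdot2m=2\pi m .
\]
So the naive Bézout count already suffices. We will therefore run the simple argument: the restriction of $P$ to a line not contained in $\{P=0\}$ has degree at most $2m$, hence at most $2m$ zeros. Only finitely many lines can lie in the algebraic curve, and these form a null set of $(\theta,p)$. Crofton then yields $\HH^1(Z)\leq 2\pi m$ directly. The only remaining care is the measurability and rectifiability of $Z$, which we handle as in the first paragraph.
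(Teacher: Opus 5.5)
Your final argument is the paper's proof: you use Crofton's formula restricted to the lines with $|p|\leq 1$, together with the count $\#(Z\cap L)\leq 2m$ coming from $P|_L$ being a nonzero polynomial of degree at most $2m$. The only difference is that the paper excludes lines contained in $Z$ via the leading term $|c_m|^2t^{2m}$, while your ``finitely many such lines, hence a null set'' works equally well.

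The middle detour aiming at $\#(Z\cap L)\leq m$ is unnecessary, and that bound is actually false. For $g(z)=1-z$ (so $m=1$), $Z$ is an arc of $|z-1|=1$, and the line $\operatorname{Re}z=1/10$ meets it at the two points $1/10\pm i\sqrt{19}/10$. You should drop that route rather than leave it as a tentative alternative.
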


\begin{proof}
For every affine line $L$, the restriction $P|_L$ is not the zero polynomial. Indeed, if $L$ is parametrized by $z=z_0+tw$ with $|w|=1$, then, as $|t|\to\infty$,
\[
        |g(z_0+tw)|^2-1=|c_m|^2|w|^{2m}t^{2m}+O(|t|^{2m-1}),
\]
where $c_m$ is the leading coefficient of $g$. Thus no affine line is contained in $Z$, and $P|_L$ has at most $2m$ real zeros. In particular
\[
        \#(Z\cap L)\leq2m
\]
for every line $L$, apart from the harmless convention that multiple roots are counted once.

The set $Z$ is a compact real algebraic curve restricted to the closed disk. Its one-dimensional strata are countably $1$-rectifiable, and its isolated algebraic points have zero $\mathcal H^1$ measure. By the Cauchy--Crofton formula for countably $1$-rectifiable sets \cite[3.2.26]{Federer1969}, equivalently in the planar normalization of Santal\'{o} \cite[Chapter 12]{Santalo1976}, in which $L_{\theta,t}$ is the affine line with normal angle $\theta\in[0,\pi)$ and signed distance $t$,
\[
        \HH^1(Z)
        \leq \frac{1}{2}\int_0^\pi\int_{\R}\#(Z\cap L_{\theta,t})\,dt\,d\theta.
\]
Only lines with $|t|\leq1$ meet the closed unit disk. Therefore
\[
        \HH^1(Z)
        \leq \frac{1}{2}\int_0^\pi\int_{-1}^{1}2m\,dt\,d\theta
        =2\pi m.
\]
\end{proof}

We next record the nodal facts which convert this global length bound into a single exit path of half that length. Since two edge-disjoint exits lie in the same algebraic level set, at least one of them will have length no more than half the total Crofton length; this is the simple source of the factor $1/2$. The argument uses the special form of the zero set of a harmonic function in the open disk. It is not a statement about arbitrary planar graphs.

\begin{lemma}[The origin component reaches the boundary]
Let $R$ be the connected component of
\[
        Z=\{z\in\C: |z|\leq1,\ P(z)=0\}
\]
which contains $0$. Then
\[
        R\cap\partial\D\ne\varnothing.
\]
\end{lemma}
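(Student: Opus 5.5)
We argue by contradiction using the maximum principle for the harmonic function $u=\log|g|$ in $\D$. Suppose $R\cap\partial\D=\varnothing$. Then $R$ is a compact connected subset of the open disk $\D$, since $0\in R$ and $u(0)=0$ shows $0\in Z$. Inside $\D$ we have $Z\cap\D=\{u=0\}$, a real-analytic nodal set. Because $u$ is nonconstant and harmonic, near $0$ this set consists of $2k\geq2$ analytic arcs emanating from $0$, where $k$ is the vanishing order of $u$ at $0$. The sign of $u$ alternates between consecutive sectors, so $u$ takes both signs near $0$.

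The first step is to separate $R$ from the rest of $Z$. Since $Z$ is compact and $R$ is one of its components, a standard Šura-Bura (quasi-component) argument gives the following: for every $\varepsilon>0$ there is a relatively clopen subset $Z_1\subset Z$ with $R\subset Z_1\subset\{\dist(\cdot,R)<\varepsilon\}$. Choose $\varepsilon<\dist(R,\partial\D)/2$. Then $Z_1\subset\D$, and $Z_1$ is at positive distance from $Z\setminus Z_1$. Let $V$ be a small open neighborhood of $Z_1$ in $\D$ whose closure misses $Z\setminus Z_1$. By taking $V$ to be a union of finitely many small disks, we may arrange that $\partial V$ is a finite union of curves disjoint from $Z$, so $u\neq0$ on $\partial V$.

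The second step produces a nodal domain compactly contained in $\D$. Consider the unbounded component $\Omega$ of $\widehat\C\setminus \overline V$, or more precisely the component of $\C\setminus\overline V$ containing $\partial\D$, and let $W=\C\setminus\overline{\Omega}$ be the filled region. Its boundary $\partial W\subset\partial V$ is connected in the appropriate sense, or at least lies in a connected region of $\D\setminus Z$. Hence $u$ has a constant sign, say $u>0$, on $\partial W$. Here is a cleaner route: the set $\D\setminus Z_1$ near $\partial V$ lies in the single component of $\D\setminus Z_1$ that reaches $\partial\D$, because $\widehat\C\setminus\widehat{Z_1}$ (the complement of the fill) is connected. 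So all points of $W\setminus Z_1$ adjacent to $\Omega$ lie in one nodal domain. Now inside the filled set $W$, near $0$ there is a sector where $u<0$ (or $u>0$) of the opposite sign. Take the connected component $Q$ of $\{u\neq0\}\cap W$ of the sign opposite to that on $\partial W$ touching $0$. Then $\overline Q\subset W\subset\D$ is compact, and $u=0$ on $\partial Q$, since $\partial Q$ cannot reach $\partial W$, where the sign is fixed. The maximum principle then forces $u\equiv0$ on $Q$, contradicting nonconstancy.

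The main obstacle is justifying that all of $\partial W$ lies in one sign region. I would handle this by using the fill of $Z_1$ and noting that $\D\setminus\widehat{Z_1}$ is connected. Thus $\D\setminus Z_1$ has a unique component $\Omega'$ touching $\partial\D$, and the rest, the holes of $Z_1$, are bounded by $Z_1$ alone. A cleaner formulation avoids signs on $\Omega'$ entirely. Near $0$ there are at least two sectors of opposite sign, and at most one of them can belong to $\Omega'$, since within $\Omega'\setminus V$ the function $u$ is nonvanishing and $\Omega'\cap V$ can be shrunk. Hence some sector lies in a hole $Q$ of $Z_1$ with $\partial Q\subset Z_1\subset\{u=0\}$, and the maximum principle on $Q$ gives the contradiction.
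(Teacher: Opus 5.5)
\textbf{Gap in the key step.} Your strategy is viable and differs from the paper's route. You separate $R$ by a clopen piece $Z_1$ of $Z$ (the \v{S}ura-Bura step is fine), trap a sign region of $u$ inside $\D$, and apply the maximum principle. The paper instead uses the real-algebraic finite-graph structure of $Z$ and the absence of degree-one nodal vertices to extract a nodal cycle. However, the step you flag as the main obstacle is not proved, and both justifications you offer fail.

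First, $\partial W$ need not be connected. $V$ is a union of disks covering $Z_1$, and $Z_1$ may contain several components of $Z$. So $\overline V$, and hence $\partial W$, can have several components carrying different signs of $u$. ``Connected in the appropriate sense'' is not an argument.

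Second, your ``cleaner'' routes rest on the connectedness of $\Omega'$, the outer component of $\D\setminus Z_1$. That is connectedness in the complement of $Z_1$, not of $Z$. In general $\Omega'$ contains points of $Z\setminus Z_1$, namely other nodal lines of $u$ in $\D$. So $u$ \emph{does} vanish on $\Omega'\setminus V$, contrary to your claim. Lying in one component of $\D\setminus Z_1$ therefore does not put points in one nodal domain. The assertion that at most one opposite-sign sector at $0$ lies in $\Omega'$ is thus unsupported; it is essentially the content of the lemma.

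\textbf{Repair.} What you need is a \emph{connected} set in $\D\setminus Z$ surrounding $R$.
\begin{itemize}
\item Replace $V$ by its component $V_0$ containing $R$. Since $\overline V\cap Z=Z_1\subset V$ and $V_0$ is closed in $V$, we get $\partial V_0\cap Z=\varnothing$.
\item Let $\Omega$ be the unbounded component of $\C\setminus\overline{V_0}$. Because $\overline{V_0}$ is a continuum, $\partial\Omega$ is connected (standard plane topology). Also $\partial\Omega\subset\partial V_0\subset\D\setminus Z$.
\item Hence $u$ has a single sign $\epsilon\in\{\pm1\}$ on $\partial\Omega$, and by compactness on a neighborhood of it.
\item Since $u$ is nonconstant harmonic with $u(0)=0$, pick $z_1\in V_0$ near $0$ with $\epsilon u(z_1)<0$. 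Let $Q$ be the component of $\{\epsilon u<0\}\setminus\overline\Omega$ containing $z_1$.
\item Then $\overline Q\subset\C\setminus\Omega\subset\D$ is compact, and $\partial Q\cap\partial\Omega=\varnothing$, so $\partial Q\subset\{u=0\}$.
\item The maximum principle forces $u\equiv0$ on $Q$, contradicting $\epsilon u<0$ there.
\end{itemize}
Equivalently, you could quote Zoretti's theorem to get a Jordan curve in $\C\setminus Z$ separating $R$ from $\partial\D$.

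\textbf{What the repaired route buys.} It needs only that $u$ changes sign at $0$, plus plane topology. It avoids the algebraic graph decomposition and the local nodal-branch analysis that the paper's cycle argument requires.
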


\begin{proof}
Suppose, to the contrary, that $R\cap\partial\D=\varnothing$. Since $R$ is compact, there is a positive distance between $R$ and $\partial\D$. In a neighborhood of $R$ the polynomial $g$ has no zeros, and hence $P=0$ is exactly the same set as $u=0$. Thus $R$ is a compact connected component of the interior nodal set $\{u=0\}$.

The set $Z$ is a compact real algebraic set. To see directly that it has the needed finite graph structure, replace $P$ by its square-free part; this does not change $Z$. For each irreducible factor $q$ of this square-free polynomial, the set $\{q=q_x=q_y=0\}$ is finite, since otherwise $q$ would divide both of its first partial derivatives, impossible in characteristic zero. Distinct irreducible factors have only finitely many common zeros unless they share a factor. Hence, away from finitely many algebraic singularities and finitely many intersections with $\partial\D$, the implicit function theorem makes $Z$ a real analytic one-manifold. At the finitely many exceptional points, the Weierstrass preparation theorem gives only finitely many local analytic half-branches. Thus $Z\cap\overline\D$ has finitely many components and each component is a finite union of real analytic arcs and vertices.

Since the component $R$ is disjoint from $\partial\D$, distinct compact components of this finite graph are separated by a positive distance, and there is an open set $U$ with
\[
        R\subset U\Subset\D,
        \qquad
        U\cap Z=R.
\]
On $U$ the equation $P=0$ is exactly the nodal equation $u=0$, and the preceding finite-branch decomposition writes $R$ as a finite union of real analytic open arcs together with finitely many vertices.

No vertex of this finite graph has degree one. There are no boundary vertices, because $U\Subset\D$ and $R$ is not being cut by an artificial boundary. At an interior regular zero of $u$, one analytic nodal arc passes through the point and gives two local branches; at an interior critical zero, at least four local branches meet. Thus the local nodal structure of a nonconstant harmonic function rules out a free endpoint. Nor can $R$ be a single point, since a nonzero harmonic function has no isolated zero. Hence $R$ is a finite connected graph with at least one edge and with every vertex of degree at least two. By the elementary graph-theoretic pruning argument, such a graph contains a simple cycle. Consequently $R$ contains a simple closed curve $\gamma\subset\D$.

By the Jordan curve theorem, $\gamma$ bounds a Jordan domain $H\Subset\D$. Since $\gamma\subset\{u=0\}$, the harmonic function $u$ has boundary value zero on $\partial H$. The maximum principle gives $u\equiv0$ in $H$, and unique continuation then gives $u\equiv0$ in the whole disk. This would force $|g|\equiv1$ in $\D$; since $g$ is holomorphic and $g(0)=1$, the open mapping theorem gives $g\equiv1$, contrary to the case under consideration. Hence the component of $Z$ containing $0$ must meet $\partial\D$.
\end{proof}

\begin{lemma}[Graph model for the origin component]\label{lem:origin-graph-model}
After subdividing at singular points of the real algebraic curve $P=0$, at the origin, and at the finitely many intersections with $\partial\D$, the component $R$ is a finite connected embedded graph. In this graph, every interior vertex other than possibly $0$ has degree at least $2$, the vertex $0$ has degree at least $2$, every leaf lies on $\partial\D$, and no graph cycle is contained in the open disk.
\end{lemma}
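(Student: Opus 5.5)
We prove the four assertions in turn: the graph structure, the interior degree bounds, the location of the leaves, and the absence of interior cycles.

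\emph{Graph structure.} We reuse the finite-branch decomposition from the proof of the previous lemma. After passing to the square-free part of $P$, the set $Z$ is a real analytic one-manifold away from a finite exceptional set. This set consists of the singular points of the curve, the intersections of $Z$ with $\partial\D$ (finite unless $\partial\D$ is a component of $\{P=0\}$), and the origin, which we add as a vertex. At each exceptional point, Weierstrass preparation gives finitely many local analytic half-branches. Each component of the complement of the vertex set in $Z\cap\overline\D$ is a real analytic arc. Its closure has endpoints at vertices, since arcs cannot accumulate: each line meets $Z$ in at most $2m$ points by the Crofton lemma. Restricting to the component $R$ then gives a finite connected embedded graph.

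\emph{Degrees.} Near an interior point $z\in\D$ of $Z$, the polynomial $g$ has no zeros, so $Z$ coincides locally with the nodal set $\{u=0\}$ of the nonconstant harmonic function $u$. The local structure of such a set is standard. Writing $u-u(z)$ as $\operatorname{Re}$ of a holomorphic function $c(w-z)^k+\cdots$ with $c\ne0$ and $k\ge1$, the nodal set near $z$ consists of exactly $2k$ analytic half-arcs emanating from $z$. Hence every interior vertex, including $0$ (where $u(0)=0$), has degree at least $2$. This settles the degree claims. It also shows that a leaf cannot lie in $\D$, so every leaf lies on $\partial\D$.

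\emph{No interior cycles.} This is the main step, and we argue by contradiction as in the previous lemma. Suppose some graph cycle is contained in $\D$. Because it is a cycle in an embedded graph, it is a simple closed curve $\gamma\subset\D\cap\{u=0\}$. It bounds a Jordan domain $H$ with $\overline H\subset\D$, by the Jordan curve theorem together with the simple connectivity of $\D$. Then $u$ is harmonic on $H$, continuous on $\overline H$, and vanishes on $\partial H$, so the maximum principle gives $u\equiv0$ on $H$. Unique continuation then gives $u\equiv0$ on $\D$, so $|g|\equiv1$ and, as before, $g\equiv1$, contradicting $m\ge1$.

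The main obstacle is bookkeeping near $\partial\D$, where $g$ may vanish and $P=0$ is no longer the nodal set of $u$. I would handle it by including boundary vertices explicitly and making every interior claim only at points of $\D$. I would also check that the degenerate case in which $\partial\D$ itself lies in $Z$ still yields finitely many vertices after subdividing at singular points. If needed, I would subdivide $\partial\D$ at its intersections with the other components of $P$, which are finite in number, together with one extra point on the circle.
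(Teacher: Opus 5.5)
Your argument is essentially the paper's. You use the square-free, implicit-function and Weierstrass decomposition for the graph structure. You use the local expansion $u=\operatorname{Re}\bigl(c(w-z)^k\bigr)+\cdots$ for the degree bounds, including at $0$. You use the Jordan curve theorem, the maximum principle and unique continuation to exclude interior cycles.

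The one point to tighten is the case $\partial\D\subset Z$. You leave it open and propose to absorb it by extra subdivision. The statement, however, asserts that $Z\cap\partial\D$ is finite, and the paper simply excludes this case, as follows.

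\begin{itemize}
\item $P|_{\partial\D}$ is a trigonometric polynomial, so either it has finitely many zeros or it vanishes identically.
\item If it vanishes identically, then $|g|=1$ on $\partial\D$, so $g$ is zero-free on $\overline\D$.
\item Then $u=\log|g|$ is harmonic in $\D$, continuous on $\overline\D$, and zero on $\partial\D$.
\item The maximum principle forces $u\equiv0$, hence $g\equiv1$, contrary to $m\geq1$.
\end{itemize}

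With that one line added, your fallback subdivision is unnecessary. The rest of your proof is fine, including the line-count argument that edge ends converge to vertices, which is more detail than the paper gives.
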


\begin{proof}
The finite real-algebraic curve decomposition used in the preceding proof applies to $Z\cap\overline\D$: after replacing $P$ by its square-free part, the implicit function theorem gives analytic arcs away from finitely many algebraic singularities, and Weierstrass preparation gives finitely many local half-branches at each exceptional point. Also, $P$ has only finitely many zeros on $\partial\D$ unless $P$ vanishes identically on the unit circle. The latter would imply $|g|=1$ on $\partial\D$; since $g$ has no zeros in $\D$ and $g(0)=1$, the maximum principle would force $g\equiv1$, contrary to the case under consideration. Thus subdivision at singular points, at $0$, and at boundary intersections gives a finite embedded graph whose edges are real analytic arcs.

At an interior regular zero of $u$, one analytic nodal arc passes through the point and gives local degree $2$. At an interior critical zero, the local expansion of a nonconstant harmonic function gives at least four nodal branches. Thus no interior vertex other than possibly $0$ has degree $1$. At the origin, if $s=\operatorname{ord}_0(g-1)\geq1$, then
\[
        u(z)=\operatorname{Re}(cz^s)+O(|z|^{s+1})
\]
with $c\ne0$, and the local nodal set has $2s\geq2$ branches. Hence $0$ has degree at least $2$. Therefore every graph leaf must lie on $\partial\D$.

Finally, a graph cycle contained in $\D$ would contain a simple closed nodal curve. By the Jordan curve theorem it bounds a domain $H\Subset\D$, and $u=0$ on $\partial H$. The maximum principle gives $u\equiv0$ in $H$, and unique continuation then gives $u\equiv0$ in $\D$, again forcing $g\equiv1$. Hence no such cycle exists.
\end{proof}

\begin{theorem}[Finite edge-Menger theorem, two-path form]\label{thm:edge-menger-two}
Let $G$ be a finite graph, let $o$ be a vertex, and let $B$ be a nonempty set of vertices not containing $o$. Then $G$ contains two edge-disjoint paths from $o$ to $B$ if and only if no single edge of $G$ separates $o$ from $B$. Equivalently, if two edge-disjoint $o$--$B$ paths do not exist, then there is an edge $e$ such that the component of $G\setminus e$ containing $o$ contains no vertex of $B$.
\end{theorem}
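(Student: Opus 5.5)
We prove the statement by contracting $B$ to a single vertex and then applying a flow or augmenting-path argument, which reduces it to the classical edge form of Menger's theorem with $k=2$.

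\emph{Reduction.} Form $G'$ by adding a new vertex $b^\ast$ joined by one edge to each vertex of $B$. There is a correspondence: $o$--$B$ paths in $G$ that stop at their first vertex of $B$ become $o$--$b^\ast$ paths in $G'$, and conversely. Two edge-disjoint $o$--$B$ paths in $G$ may end at the same vertex of $B$, which would make the added edge shared. To avoid this, give each added edge capacity $\infty$, or equivalently replace it by two parallel edges. The other option is to identify all of $B$ into one vertex $b^\ast$ and keep the parallel edges and loops this creates. Identification is the cleaner choice, since edge-disjointness in $G$ then matches edge-disjointness in the multigraph $G/B$ exactly.

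\emph{Easy direction.} Suppose a single edge $e$ separates $o$ from $B$. Every $o$--$B$ path must then use $e$, so two edge-disjoint such paths cannot exist.

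\emph{Hard direction.} Suppose no single edge separates $o$ from $B$. Then in $G/B$ no single edge separates $o$ from $b^\ast$, because an edge cut in $G/B$ pulls back to an edge cut in $G$ separating $o$ from all of $B$. We run the Ford--Fulkerson augmenting-path argument with unit capacities.
\begin{enumerate}
\item Take one $o$--$b^\ast$ path $P_1$; it exists because $G$ is connected on the relevant component, since otherwise the empty cut would separate.
\item Orient $P_1$ and form the residual graph: every edge not on $P_1$ can be used in either direction, and each edge of $P_1$ can be used only backwards.
\item If the residual graph contains an $o$--$b^\ast$ path, take the symmetric difference of it with $P_1$. This yields a unit-flow of value $2$, which decomposes into two edge-disjoint $o$--$b^\ast$ trails. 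Shortcutting these trails gives two edge-disjoint paths.
\item If no residual path exists, let $S$ be the set of vertices reachable from $o$ in the residual graph. No non-$P_1$ edge leaves $S$, and the only edges crossing $\partial S$ are edges of $P_1$ directed out of $S$. By the standard flow-conservation count, the net number of forward crossings of $P_1$ out of $S$ equals the flow value $1$, and the residual condition forbids any backward crossing into $S$. So exactly one edge crosses, and it separates $o$ from $b^\ast$, a contradiction.
\end{enumerate}

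Finally we pull back to $G$. Each path in $G/B$ ends at the first point where it enters $B$, so it is an $o$--$B$ path in $G$. Edge-disjointness is preserved, since the edge sets correspond bijectively.

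We expect the main obstacle to be step 4: showing precisely that the residual cut consists of exactly one edge, which requires the crossing-count bookkeeping. This is the standard max-flow/min-cut step. Alternatively, one can simply cite Menger's theorem (edge version) for $k=2$, for example Diestel, \emph{Graph Theory}, Section~3.3.
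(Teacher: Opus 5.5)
Your proposal is correct, but it takes a different route from the paper. The paper gives no proof of this statement; it simply quotes the edge version of Menger's theorem from Diestel, Chapter~3, specialized to two paths. You instead give a self-contained augmenting-path argument (Ford--Fulkerson with unit capacities) on the multigraph $G/B$, obtained by identifying $B$ to a single vertex $b^\ast$.

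Your steps check out. The residual-reachable set $S$ contains $o$ but not $b^\ast$, and every non-$P_1$ edge with an endpoint in $S$ lies inside $S$. No edge of $P_1$ can enter $S$, and $P_1$ starts in $S$ and ends outside it, so it leaves $S$ exactly once. Hence the edge boundary of $S$ is a single edge, which pulls back to an edge of $G$ whose removal cuts $o$ off from $B$. The step you flagged as the main obstacle is therefore already complete as written.

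Your route proves exactly the form the paper uses. The classical edge version is usually stated for two vertices $a,b$, so the citation tacitly needs a reduction from the set $B$ to a single vertex. As you observe, attaching a super-sink by single edges breaks edge-disjointness when both paths end at the same vertex of $B$, while identification does not. Your argument also runs directly on multigraphs with loops. That is the natural setting for the subdivided nodal graph in the upper bound, where two analytic edges may join the same pair of vertices. The citation, in exchange, buys brevity and the general $k$-path statement.

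One small correction to step~1. The contradiction is not with the empty cut, since the hypothesis only concerns single edges. Rather, if no $o$--$B$ path exists, then every edge of $G$ separates $o$ from $B$. This needs $G$ to have at least one edge. Indeed the statement as written fails for an edgeless $G$: no edge separates, yet no path exists. This is harmless in the application, where $\deg o\geq 2$.
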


This is the edge version of Menger's theorem in the finite case; see Diestel \cite[Chapter 3]{Diestel2017}. Only the stated two-path specialization is used below.

\begin{lemma}[A graph exit lemma]\label{lem:graph-exit}
Let $G$ be a finite connected graph embedded in the closed disk, let $o$ be an interior vertex of degree at least $2$, and let $B$ be the set of boundary vertices. Suppose every leaf of $G$ lies in $B$ and no graph cycle is contained in the open disk. Then $G$ contains two edge-disjoint paths from $o$ to $B$.
\end{lemma}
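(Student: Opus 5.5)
We argue by contradiction through Theorem~\ref{thm:edge-menger-two}. Note first that $B\neq\varnothing$: if $G$ had no boundary vertices it would have no leaves, so every vertex would have degree at least $2$, and by pruning it would contain a cycle lying in the open disk, which is excluded. Suppose two edge-disjoint $o$--$B$ paths do not exist. Then the theorem gives an edge $e=xy$ such that the component $G_o$ of $G\setminus e$ containing $o$ meets no vertex of $B$. Since $G$ is connected, $e$ is a bridge, with one endpoint, say $x$, in $G_o$ and the other endpoint $y$ in the complementary component.

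The plan is to show that $G_o$ must contain a cycle, and to locate that cycle. Every vertex of $G_o$ is interior, since $G_o\cap B=\varnothing$. We compute degrees inside $G_o$. A vertex $v\neq x$ of $G_o$ keeps its full degree, because $e$ is the only edge leaving $G_o$. Such a $v$ is not a leaf of $G$ (leaves lie in $B$), so $\deg_{G_o}(v)\ge2$. The vertex $x$ loses exactly one edge. If $x=o$, then $\deg_{G_o}(o)\ge 2-1=1$. If $x\neq o$, then $x$ is an interior non-leaf vertex, so again $\deg_{G_o}(x)\ge1$.

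So $G_o$ is a finite graph with at most one vertex of degree $1$. We must also rule out $G_o$ being the single vertex $x$. In that case we would have $x=o$, since $o\in G_o$. But $o$ has degree at least $2$, so $o$ has another edge besides $e$, and that edge lies in $G_o$. This contradicts $G_o=\{o\}$. Hence $G_o$ has at least one edge.

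Now apply the pruning argument. A finite graph with at least one edge in which at most one vertex has degree less than $2$ cannot be a tree, because a nontrivial tree has at least two leaves. Hence $G_o$ contains a cycle. Any self-loops or multiple edges also count as cycles, and the same argument applies to them.

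The cycle consists of vertices and edges of $G_o$. Its vertices are interior. An edge of $G$ joining interior vertices is an arc whose interior might touch the boundary circle only if the embedding allowed that. We assume edges meet $\partial\D$ only at boundary vertices, which is the convention of Lemma~\ref{lem:origin-graph-model}, where boundary intersections were subdivided. Under this convention the cycle lies in the open disk, contradicting the hypothesis.

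The main obstacle is this embedding convention, namely that edge interiors avoid $\partial\D$. I would state it explicitly as part of the meaning of ``boundary vertices'', justified by the subdivision performed in Lemma~\ref{lem:origin-graph-model}. The degree bookkeeping at $x=o$ is the other point where care is needed.
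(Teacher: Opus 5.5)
Your proof is correct and is essentially the paper's argument. Theorem~\ref{thm:edge-menger-two} yields a separating edge $e$; the component of $G\setminus e$ containing $o$ then has at least one edge and at most one vertex of degree below $2$ (the endpoint of $e$), so it cannot be a tree. The paper runs the same leaf count in the other direction (``$H$ is acyclic, hence a tree, hence $G$ has an interior leaf''), while your explicit checks that $B\neq\varnothing$ and that edge interiors meet $\partial\D$ only at subdivision vertices fill in what the paper leaves implicit in ``Hence $H$ is embedded in the open disk.''
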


\begin{proof}
Assume that two edge-disjoint $o$--$B$ paths do not exist. By Theorem~\ref{thm:edge-menger-two}, there is an edge $e$ such that the component $H$ of $G\setminus e$ containing $o$ contains no boundary vertex. Hence $H$ is embedded in the open disk.

Since $o$ has degree at least $2$ in $G$, the graph $H$ is not a single isolated vertex: if it were, then deleting the one edge $e$ would remove every edge incident to $o$, forcing $\deg_G(o)=1$. Thus $H$ has at least one edge.

The graph $H$ cannot contain a cycle, because any such cycle would be contained in the open disk, contrary to the hypothesis on $G$. Hence $H$ is a finite tree. Every finite tree with at least one edge has at least two leaves. At most one leaf of $H$ can fail to be a leaf of $G$, namely the endpoint of the deleted edge $e$ that lies in $H$. Therefore $G$ has at least one leaf belonging to $H$. But $H$ contains no boundary vertex, so this leaf is an interior leaf of $G$, contradicting the assumption that every leaf of $G$ lies in $B$. This contradiction proves that two edge-disjoint $o$--$B$ paths exist.
\end{proof}

\begin{lemma}[Two disjoint exits from the origin]
The component $R$ contains two edge-disjoint rectifiable arcs from $0$ to $\partial\D$. Here edge-disjoint means that, after the finite graph subdivision of Lemma~\ref{lem:origin-graph-model}, the two paths share no open graph edge. They may meet at the initial vertex $0$ and may also meet at finitely many vertices, but their one-dimensional edge interiors are disjoint.
\end{lemma}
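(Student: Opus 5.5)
The plan is to combine the graph model of Lemma~\ref{lem:origin-graph-model} with the graph exit lemma, Lemma~\ref{lem:graph-exit}, and then turn graph paths into rectifiable arcs.

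First, regard $R$, subdivided at the singular points of $P=0$, at $0$, and at its finitely many intersections with $\partial\D$, as a finite connected graph $G$ embedded in $\overline\D$. The edges are real analytic arcs. Take $o=0$ and let $B=R\cap\partial\D$ be the set of boundary vertices; $B$ is nonempty by the lemma that the origin component reaches the boundary. Lemma~\ref{lem:origin-graph-model} then supplies exactly the hypotheses of Lemma~\ref{lem:graph-exit}:
\begin{itemize}
\item $0$ is an interior vertex of degree at least $2$, which uses $u(0)=0$ and the local expansion $\operatorname{Re}(cz^s)$;
\item every leaf lies on $\partial\D$;
\item no graph cycle lies in the open disk.
\end{itemize}
Lemma~\ref{lem:graph-exit} therefore gives two edge-disjoint graph paths from $0$ to $B$.

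Next, I would pass to shorter graph paths. Each path may be taken to be simple in the graph sense, repeating no vertex. Truncating it at its first vertex in $B$ gives a path whose interior vertices all lie in $\D$, and this preserves edge-disjointness. The geometric realization of each path is a concatenation of finitely many real analytic arcs, which I would take to be compact edges of the subdivided curve. Each such edge has finite length, since $\HH^1(Z)\le 2\pi m$ by the Crofton lemma. So each realized path is a rectifiable arc from $0$ to a point of $\partial\D$. Because the graph path is simple and the embedding is injective on edge interiors and vertices, the realization is an injective curve, that is, an arc.

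Finally, I would check that the realizations meet only as the statement allows. Two edge-disjoint graph paths share no edge, so their open edge interiors are disjoint subsets of $Z$: distinct edges of an embedded graph have disjoint interiors. The only common points are shared vertices, which include $0$ and at most finitely many others.

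The main obstacle is justifying that the subdivision produces edges that are honest compact analytic arcs of finite length, including near the boundary points where $g$ may vanish. I would handle this the way Lemma~\ref{lem:origin-graph-model} does, working with the polynomial $P$ rather than with $u$. The real-algebraic decomposition gives finitely many half-branches at each exceptional point, and the $\HH^1$ bound gives finite length. Everything else is bookkeeping.
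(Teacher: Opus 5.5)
Your proposal is correct and is essentially the paper's proof: view $R$ as the finite embedded graph of Lemma~\ref{lem:origin-graph-model}, apply Lemma~\ref{lem:graph-exit} with root $0$ and boundary set $R\cap\partial\D$, and read off the two edge-disjoint graph paths as rectifiable arcs from $0$ to $\partial\D$. Your extra steps only spell out what the paper compresses into ``finite unions of real analytic arcs, hence rectifiable'': passing to simple paths, truncating at the first boundary vertex, and getting finite edge length from $\HH^1(Z)\le 2\pi m$ via length equals $\HH^1$ of the trace for an arc.
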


\begin{proof}
Apply Lemma~\ref{lem:origin-graph-model} to view $R$ as a finite embedded graph and then apply Lemma~\ref{lem:graph-exit} with root $0$ and boundary set $R\cap\partial\D$. The resulting edge-disjoint graph paths are finite unions of real analytic arcs, hence are rectifiable arcs in $R$ from $0$ to $\partial\D$.
\end{proof}

\begin{proposition}[Linear upper bound]
For every $n\geq1$,
\[
        S(n)\leq \pi n.
\]
\end{proposition}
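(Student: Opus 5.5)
The plan is to assemble the pieces of this section. Fix $f\in\mathcal P_n$ and form the reflected polynomial $g(z)=\prod_j(1-\overline{a_j}z)$. The identity $|1-\overline{a_j}z|^2-|z-a_j|^2=(1-|z|^2)(1-|a_j|^2)$ gives $A_g\subset E_f$. Hence it suffices to exhibit a path in $A_g$ from $0$ to $\partial\D$ of length at most $\pi n$. If $g\equiv1$, the radial segment has length $1\le\pi n$ and we are done. Otherwise $m=\deg g$ satisfies $1\le m\le n$, and we work with the zero set $Z=\{|z|\le1,\ P(z)=0\}$. Since $Z\subset A_g$, any path inside $Z$ is admissible.

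By the lemma on the origin component, the component $R$ of $Z$ containing $0$ meets $\partial\D$. By Lemma~\ref{lem:origin-graph-model}, $R$ is a finite embedded graph satisfying the hypotheses of Lemma~\ref{lem:graph-exit}: the origin has degree at least $2$, all leaves lie on $\partial\D$, and there are no cycles in $\D$. The two-exits lemma therefore produces rectifiable arcs $\gamma_1,\gamma_2\subset R$ from $0$ to $\partial\D$ whose open edges are disjoint. They can share only finitely many vertices, which have $\HH^1$-measure zero, so
\[
\ell(\gamma_1)+\ell(\gamma_2)=\HH^1(\gamma_1\cup\gamma_2)\le\HH^1(R)\le\HH^1(Z)\le 2\pi m.
\]
The last inequality is the Crofton lemma. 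To justify the first equality, we take each $\gamma_i$ to be a simple graph path, so that it traverses each edge once and its length equals the $\HH^1$-measure of its trace. Choosing the shorter arc gives a path of length at most $\pi m\le\pi n$ inside $Z\subset A_g\subset E_f$. This path runs from $0$ to a point of $\partial\D$, which is exactly what the infimum in the definition of $S(n)$ requires.

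Taking the supremum over $f\in\mathcal P_n$ then yields $S(n)\le\pi n$. The main point needing care is the additivity step: the lengths of the graph paths must equal the $\HH^1$-measures of their traces, and edge-disjointness must give additivity. I would handle this by taking simple paths in the graph, with no repeated edges, each edge being a real analytic arc. The length of a simple path is then the sum of its edge lengths, and disjoint open edges contribute additively to $\HH^1(Z)$. Everything else is a direct citation of the earlier lemmas.
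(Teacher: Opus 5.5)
Your proposal is correct and follows the paper's proof essentially step for step. You reflect to $g$ to get $A_g\subset E_f$, use the origin-component, graph-model and graph-exit lemmas to get two edge-disjoint exits from $0$ to $\partial\D$ inside $R\subset Z$, and split the Crofton bound $\HH^1(Z)\le 2\pi m$ between them. Your remark that the Menger paths are simple, so each length equals the $\HH^1$-measure of its trace and the traces overlap only in finitely many vertices, is the same justification the paper gives for $\ell(\gamma_1)+\ell(\gamma_2)=\HH^1(\gamma_1\cup\gamma_2)$.
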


\begin{proof}
Let $f\in\mathcal P_n$ be arbitrary, and let
\[
        g(z)=\prod_{j=1}^{n}(1-\overline{a_j}z)
\]
be the swept polynomial. If $g\equiv1$, a radius gives an admissible path of length $1\leq\pi n$ in $A_g\subset E_f$. Assume therefore that $g\not\equiv1$, and put $m=\deg g$.

Let $R$ be the connected component of
\[
        Z=\{z\in\C: |z|\leq1,\ |g(z)|^2-1=0\}
\]
which contains the origin. Since $g(0)=1$, the origin belongs to $Z$, and the preceding lemmas show that $R$ contains two edge-disjoint arcs $\gamma_1,\gamma_2$ from $0$ to $\partial\D$. Since these arcs are edge-disjoint in the subdivided graph, their edge interiors are disjoint subsets of the rectifiable set $R$. A finite set of common vertices has zero $\HH^1$-measure, and the length of each graph path is the sum of the $\HH^1$-lengths of its constituent analytic edges. Therefore
\[
        \ell(\gamma_1)+\ell(\gamma_2)
        =\HH^1(\gamma_1\cup\gamma_2)
        \leq \HH^1(R)\leq \HH^1(Z)\leq2\pi m.
\]
Consequently at least one of the two arcs, call it $\gamma$, satisfies
\[
        \ell(\gamma)\leq\pi m\leq\pi n.
\]
On $Z$ one has $|g|=1$, and hence $\gamma\subset A_g$. Since $A_g\subset E_f$, the arc $\gamma$ is an admissible path for the original polynomial $f$. Because $f\in\mathcal P_n$ was arbitrary, this proves $S(n)\leq\pi n$.
\end{proof}

\begin{remark}[A branching refinement]
If $s=\operatorname{ord}_0(g-1)$, the same local nodal analysis gives $2s$ edge-disjoint exits from the origin to $\partial\D$ in the component $R$. Averaging over all these exits gives the stronger estimate
\[
        L_f\leq \frac{\pi m}{s}\leq \frac{\pi n}{s}.
\]
The proposition uses only the case $s\geq1$, but this refinement records that a higher-order first contact of $g$ with the value $1$ at the origin can only shorten the swept-level exit supplied by the argument.
\end{remark}

\section{Proof of the main theorem}

Proposition~\ref{prop:lower-bound} gives the lower bound, and the linear upper-bound proposition gives the upper bound. Hence, for all sufficiently large $n$,
\[
        c\sqrt{\log n}\leq S(n)\leq\pi n.
\]
In particular $S(n)\to\infty$.

\begin{remark}
The proof does not identify the true order of $S(n)$. It only places it between the explicit lower obstruction supplied by the maze construction and the universal linear upper bound supplied by reciprocal sweeping.
\end{remark}

\backmatter

\begin{appendices}

\section{Auxiliary normalizations}

The Harnack-chain convention used in the Green lower bound is the following: along a chain of $M$ disks with fixed relative overlap, a positive harmonic function changes by at most a factor $e^{CM}$. In the proof of Lemma~\ref{lem:harnack-lower-bound}, the disks are obtained by covering the corridor from Lemma~\ref{lem:maze-tree-corridor}.

The capacity normalization is the logarithmic one used throughout the paper. In Lemma~\ref{lem:maze-tree-corridor}, the upper capacity bound follows from monotonicity and the containment $\widetilde K_N\subset\overline{D(0,3/4)}$. The lower bound follows from monotonicity and the fact that $\widetilde K_N$ contains a rotated and scaled copy, with scale bounded below, of one fixed nondegenerate circular arc. Thus all capacities used in the Faber estimates are bounded above and below by absolute constants.

The polynomial-separation input is the Faber-polynomial construction of Sections~\ref{sec:faber-levels}--\ref{sec:faber-separator}. Beurling's projection theorem is used only in its annular harmonic-measure consequence for a continuum crossing an annulus; see Ahlfors \cite[Chapter III, Section B]{Ahlfors1973} or Garnett--Marshall \cite[p. 105]{GarnettMarshall2005}. The exterior area theorem is the coefficient estimate for schlicht functions in the exterior disk, stated for instance in Duren \cite[Theorem 2.1]{Duren1983}. The Cauchy--Crofton input in the upper bound is the rectifiable-set formula of Federer \cite[3.2.26]{Federer1969}, equivalently the planar normalization in Santal\'{o} \cite[Chapter 12]{Santalo1976}. The local nodal structure used in Section~\ref{sec:upper-bound} is the elementary Weierstrass-preparation description recorded in Appendix~B below.

\section{Nodal-set conventions}

We also spell out the elementary nodal-set convention used in Section~\ref{sec:upper-bound}. If $u$ is a nonconstant harmonic function in a neighborhood of a point $z_0$ and $u(z_0)=0$, then the first nonzero term in its Taylor expansion is the real part of a nonzero holomorphic monomial. Equivalently, after a rotation and multiplication by a nonzero real constant,
\[
        u(z)=\operatorname{Re}(z-z_0)^m+O(|z-z_0|^{m+1})
\]
for some $m\geq1$. The Weierstrass preparation theorem, or the elementary local theory of harmonic conjugates, then gives that the nodal set near $z_0$ consists of $2m$ real analytic arcs meeting at equal angles. Thus an interior nodal component has no free endpoint. In the proof of the upper bound this fact is applied only on compact subsets of $\D$, where $u=\log|g|$ is single-valued and harmonic because the swept polynomial has no zeros in the open disk.

The global finiteness used there comes from algebraicity. The set $Z=\{|g|^2-1=0\}\cap\overline\D$ is a real algebraic curve of degree at most $2n$ restricted to a compact disk. After subdividing at singularities, it is a finite union of real analytic arcs and isolated singular points. The local harmonic description rules out isolated interior points in the component through $0$, and rules out degree-one interior endpoints. Therefore, if that component were compactly contained in $\D$, it would be a finite connected graph with every vertex of degree at least two, and hence it would contain a simple cycle. The maximum-principle argument in Section~\ref{sec:upper-bound} is precisely what excludes this possibility. The graph-exit lemma packages this no-interior-cycle obstruction into the existence of two edge-disjoint exits, which is the source of the factor $1/2$ improving the Crofton estimate $2\pi n$ to the path estimate $\pi n$.

\end{appendices}


\begin{thebibliography}{20}


\bibitem{Ahlfors1973}
L. V. Ahlfors, \emph{Conformal Invariants: Topics in Geometric Function Theory}, McGraw-Hill, New York, 1973.

\bibitem{Diestel2017}
R. Diestel, \emph{Graph Theory}, fifth ed., Graduate Texts in Mathematics, vol. 173, Springer, Berlin, 2017.

\bibitem{Duren1983}
P. L. Duren, \emph{Univalent Functions}, Grundlehren der mathematischen Wissenschaften, vol. 259, Springer, New York, 1983.

\bibitem{EHP1958}
P. Erd\H{o}s, F. Herzog, and G. Piranian, Metric properties of polynomials, \emph{J. Analyse Math.} \textbf{6} (1958), 125--148.

\bibitem{ErdosProblems1120}
T. Bloom, Erd\H{o}s Problems, Problem 1120, \emph{Shortest path in a lemniscate sublevel set}, \url{https://www.erdosproblems.com/latex/1120}, accessed June 10, 2026.

\bibitem{EremenkoHayman1999}
A. Eremenko and W. Hayman, On the length of lemniscates, \emph{Michigan Math. J.} \textbf{46} (1999), no. 2, 409--415.

\bibitem{Federer1969}
H. Federer, \emph{Geometric Measure Theory}, Die Grundlehren der mathematischen Wissenschaften, vol. 153, Springer, New York, 1969.

\bibitem{FryntovNazarov2009}
A. Fryntov and F. Nazarov, New estimates for the length of the Erd\H{o}s--Herzog--Piranian lemniscate, in \emph{Linear and Complex Analysis}, Amer. Math. Soc. Transl. Ser. 2, vol. 226, American Mathematical Society, Providence, RI, 2009, pp. 49--60.

\bibitem{GarnettMarshall2005}
J. B. Garnett and D. E. Marshall, \emph{Harmonic Measure}, New Mathematical Monographs, vol. 2, Cambridge University Press, Cambridge, 2005.

\bibitem{Hayman1974}
W. K. Hayman, Research problems in function theory: new problems, in \emph{Proceedings of the Symposium on Complex Analysis (Canterbury, 1973)}, London Mathematical Society Lecture Note Series, vol. 12, Cambridge University Press, London, 1974, pp. 155--180.

\bibitem{HaymanLingham2019}
W. K. Hayman and E. F. Lingham, \emph{Research Problems in Function Theory}, Problem Books in Mathematics, Springer, Cham, 2019.

\bibitem{KuznetsovaTkachev2003}
O. S. Kuznetsova and V. G. Tkachev, Length functions of lemniscates, \emph{Manuscripta Math.} \textbf{112} (2003), no. 4, 519--538.

\bibitem{Ransford1995}
T. Ransford, \emph{Potential Theory in the Complex Plane}, London Mathematical Society Student Texts, vol. 28, Cambridge University Press, Cambridge, 1995.

\bibitem{SaffTotik1997}
E. B. Saff and V. Totik, \emph{Logarithmic Potentials with External Fields}, Grundlehren der mathematischen Wissenschaften, vol. 316, Springer, Berlin, 1997.

\bibitem{Santalo1976}
L. A. Santal\'{o}, \emph{Integral Geometry and Geometric Probability}, Encyclopedia of Mathematics and its Applications, vol. 1, Addison-Wesley, Reading, MA, 1976.

\bibitem{Tao2025}
T. Tao, The maximal length of the Erd\H{o}s--Herzog--Piranian lemniscate in high degree, arXiv:2512.12455, 2025.

\bibitem{Walsh1969}
J. L. Walsh, \emph{Interpolation and Approximation by Rational Functions in the Complex Domain}, fifth ed., American Mathematical Society Colloquium Publications, vol. 20, American Mathematical Society, Providence, RI, 1969.

\end{thebibliography}
\end{document}